\newtheorem{thm}{Theorem}[section]
\newtheorem{lem}[thm]{Lemma}
\newtheorem{prop}[thm]{Proposition}
\newtheorem{cor}[thm]{Corollary}
\newtheorem{notat}[thm]{Notation}
\newtheorem{ass}[thm]{Assumption}
\newcommand{\End}{{\rm End}}
\newcommand{\Hom}{{\rm Hom}}
\newcommand{\GL}{\mathrm{GL}}
\newcommand{\SL}{\mathrm{SL}}
\newcommand{\Image}{\mathrm{im}}
\newcommand{\Katz}{\mathrm{Katz}}
\newcommand{\cl}{\mathrm{cl}}
\newcommand{\id}{\mathrm{id}}
\newcommand{\pr}{\mathrm{pr}}
\DeclareMathOperator{\Gal}{Gal}
\DeclareMathOperator{\Frob}{Frob}
\DeclareMathOperator{\Mat}{Mat}
\DeclareMathOperator{\charpoly}{charpoly}
\newcommand{\cG}{\mathcal{G}}
\newcommand{\cI}{\mathcal{I}}
\newcommand{\fm}{\mathfrak{m}}
\newcommand{\CC}{\mathbb{C}}
\newcommand{\FF}{\mathbb{F}}
\newcommand{\NN}{\mathbb{N}}
\newcommand{\QQ}{\mathbb{Q}}
\newcommand{\TT}{\mathbb{T}}
\newcommand{\ZZ}{\mathbb{Z}}
\newcommand{\Qbar}{\overline{\QQ}}
\newcommand{\Fbar}{\overline{\FF}}
\newcommand{\rhobar}{\overline{\rho}}
\newcommand{\diam}[1]{{\langle #1 \rangle}}
\newcommand{\mat}[4]{
 \left(  \begin{smallmatrix} #1 & #2 \\ #3 & #4 \end{smallmatrix} \right)}
\begin{document}

\title{On Galois Representations of Weight One}
\author{Gabor Wiese}
\maketitle

\begin{abstract}
A two-dimensional Galois representation into the Hecke algebra of Katz modular
forms of weight one over a finite field of characteristic~$p$ is constructed and is
shown to be unramified at~$p$ in most cases.
\bigskip

\noindent Keywords: Galois representations, modular forms of weight one, Hecke algebras\\
MSC (2010): 11F80 (primary); 11F33, 11F25 (secondary)\\
\end{abstract}

\maketitle

\section{Introduction}

Let $g=\sum_{n=1}^\infty a_n q^n$
be a holomorphic cuspidal Hecke eigenform of weight~$1$ on $\Gamma_1(N)$ with Dirichlet
character~$\epsilon$. Deligne and Serre \cite{deligne-serre} constructed a Galois representation
$$ \rho_g: \Gal(\Qbar/\QQ) \to \GL_2(\CC),$$
which is irreducible, unramified outside~$N$ and characterised by
$\rho_g(\Frob_\ell)$ having characteristic polynomial $X^2 - a_\ell X + \epsilon(\ell)$
for all primes~$\ell \nmid N$. Here, and throughout, $\Frob_\ell$ denotes an arithmetic
Frobenius element.
Let now $p \nmid N$ be a prime number. Reducing $\rho_g$ modulo (a prime above)~$p$
and semisimplifying yields a Galois representation
$$ \rhobar_g: \Gal(\Qbar/\QQ) \to \GL_2(\Fbar_p),$$
which is still unramified outside $N$ (in particular, at~$p$)
and still satisfies the respective formula for the characteristic polynomials
at all unramified primes. In fact, $\rhobar_g$ only depends on the reduction
of (the coefficients of)~$g$ modulo (a prime above)~$p$.

In this article, we shall work more generally and
study normalised cuspidal Katz eigenforms~$g$ over~$\Fbar_p$
of weight~$1$ on $\Gamma_1(N)$ with Dirichlet character~$\epsilon$
and $q$-expansion (at~$\infty$) $\sum_{n=1}^\infty a_n q^n$ for $p \nmid N$
(see Section~\ref{sec:mf}).
Unlike when the weight is at least~$2$, not all such $g$ can be obtained by reducing
holomorphic weight~$1$ forms. The first such nonliftable example was found by Mestre
(see Appendix~A of~\cite{edix-jussieu}).
Nevertheless, $g$ also has an attached Galois representation $\rhobar_g$ which is unramified
outside $Np$ and such that the characteristic polynomials at unramified primes look as before.
In their study of companion forms, Gross, Coleman and Voloch proved that $\rhobar_g$ is
also unramified at~$p$ in almost all cases.

\begin{prop}[Gross, Coleman, Voloch]\label{prop:weightone}
If $p = 2$, assume that $a_p^2\neq 4 \epsilon(p)$ (i.e.\ $a_2 \neq 0$).
Then the residual representation $\rhobar_g$ is unramified at~$p$.
\end{prop}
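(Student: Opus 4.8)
The plan is to reduce the statement to a local assertion at $p$ and then to exploit the special feature of weight one, namely that both ``Frobenius eigenvalues'' at $p$ are nonzero. Write $\alpha,\beta$ for the roots of $X^2 - a_p X + \epsilon(p)$. Since $p\nmid N$, the character $\epsilon$ is unramified at~$p$ and $\alpha\beta = \epsilon(p)\in\Fbar_p^\times$, so both roots are nonzero; likewise $\Det\rhobar_g = \epsilon$ is unramified at~$p$. Fix a decomposition group $G_p\subset\Gal(\Qbar/\QQ)$ with inertia subgroup $I_p$; it suffices to prove that $\rhobar_g|_{I_p}$ is trivial. For $u\in\Fbar_p^\times$ let $\lambda(u)$ denote the unramified character of $G_p$ sending an arithmetic Frobenius to~$u$, so that $\epsilon|_{G_p} = \lambda(\epsilon(p))$.

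Next I would bring in the theory of ordinary forms. Multiplying $g$ by the Hasse invariant~$A$ (of weight $p-1$ and $q$-expansion~$1$) produces a form $gA$ of weight~$p$ with the same $q$-expansion, hence the same Hecke eigenvalues and the same attached representation $\rhobar_g$; this lets me borrow the structure theory available in weight~$\geq 2$. Passing to $\Gamma_1(N)\cap\Gamma_0(p)$, I form the two $p$-stabilisations of~$g$, eigenforms for $U_p$ with eigenvalues $\alpha$ and~$\beta$. For the stabilisation with eigenvalue~$\alpha$, the local structure theorem for ordinary representations (Deligne, Mazur--Wiles, Wiles) gives
\[ \rhobar_g|_{G_p} \cong \begin{pmatrix}\lambda(\beta) & * \\ 0 & \lambda(\alpha)\end{pmatrix}, \]
the diagonal being unramified because the cyclotomic twist $\chi^{k-1}$ is trivial here: for $k=1$ it is $\chi^0$, and in the weight-$p$ incarnation it is $\chi^{p-1}=1$ (the determinant check $\lambda(\alpha\beta)=\lambda(\epsilon(p))=\epsilon|_{G_p}$ fixes the two diagonal entries). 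Symmetrically, the $\beta$-stabilisation yields
\[ \rhobar_g|_{G_p} \cong \begin{pmatrix}\lambda(\alpha) & *' \\ 0 & \lambda(\beta)\end{pmatrix}. \]

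If $\alpha\neq\beta$ I can finish immediately. The first presentation exhibits a $G_p$-stable line on which $G_p$ acts through $\lambda(\beta)$, and the second a $G_p$-stable line on which it acts through $\lambda(\alpha)$. Since $\lambda(\alpha)\neq\lambda(\beta)$, these two lines are distinct, whence $\rhobar_g|_{G_p}\cong\lambda(\alpha)\oplus\lambda(\beta)$ is a sum of unramified characters and $\rhobar_g|_{I_p}$ is trivial. This already covers the case $p=2$: the hypothesis $a_p^2\neq 4\epsilon(p)$ is exactly the condition $(\alpha-\beta)^2 = a_p^2 - 4\epsilon(p)\neq 0$, that is, $\alpha\neq\beta$.

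The remaining case $\alpha=\beta$, which forces $p$ to be odd (so $\alpha=a_p/2$), is the main obstacle. Now the two stabilisations coincide and I obtain only a single stable line, giving $\rhobar_g|_{G_p}\cong\bigl(\begin{smallmatrix}\lambda(\alpha) & * \\ 0 & \lambda(\alpha)\end{smallmatrix}\bigr)$ with unramified diagonal but an a priori possibly ramified extension class~$*$; the task is to show $*$ is unramified on~$I_p$. I expect to settle this not by Hecke-theoretic stabilisation but geometrically, following Coleman--Voloch: the Kodaira--Spencer isomorphism and the structure of the ordinary locus constrain the extension class coming from a weight-one form and force it to split over inertia. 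This is precisely the input that eliminates the extra hypothesis present in Gross's original ordinary argument, and it is where the weight being exactly one, rather than the formal manipulations above, does the real work.
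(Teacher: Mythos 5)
Your argument is essentially the paper's: the paper proves this proposition purely by citation --- Coleman--Voloch for $p>2$ and Gross (with compatibilities settled by Cais) for the $p$-distinguished case --- and your $\alpha\neq\beta$ case is precisely a reconstruction of Gross's two-stabilisations/ordinary-filtration argument, while your $\alpha=\beta$ case defers to Coleman--Voloch exactly as the paper does. The one caution is that the local structure theorem you invoke for mod~$p$ ordinary forms of weight~$p$ is itself the deep input whose `unchecked compatibilities' the paper flags as being resolved only in Cais's thesis, so that step is a citation rather than something your sketch establishes.
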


\begin{proof}
The case $p>2$ is treated by \cite{coleman-voloch} without any assumption on $a_p$.
\cite{gross} proves the result for all~$p$ (i.e.\ including $p=2$), but with the extra
condition, subject to some unchecked compatibilities, which have now been settled
by Bryden Cais in~\cite{cais-thesis}, Chapter~10.
\end{proof}

In this article we give a somewhat different proof and remove the condition in the case $p=2$.
The main objective, however, is to extend the representation $\rhobar_g$ to a
$2$-dimensional Galois representation
with coefficients in the weight~$1$ Hecke algebra and to show, in most cases,
that it is also unramified outside $N$, in particular at~$p$, with the natural characteristic
polynomials at all unramified primes.

We now introduce the notation necessary to state the main result.
Let $S_1(\Gamma_1(N);\Fbar_p)_\Katz$ be the $\Fbar_p$-vector space of
cuspidal Katz modular forms of weight~$1$ on~$\Gamma_1(N)$ over~$\Fbar_p$ and
let $\TT_1$ be the Hecke algebra acting on it,
i.e.\ the $\FF_p$-subalgebra inside $\End_{\Fbar_p}(S_1(\Gamma_1(N);\Fbar_p)_\Katz)$
generated by all Hecke operators~$T_n$ for $n \in \NN$ (see also Section~\ref{sec:mf}).
Let $\fm$ be the maximal ideal of~$\TT_1$ defined as the kernel of the ring homomorphism
$\TT_1 \xrightarrow{T_n \mapsto a_n} \Fbar_p$.
Let $\TT_{1,\fm}$ denote the localisation of $\TT_1$ at~$\fm$.
For the representation $\rhobar_g$ we shall also write $\rhobar_\fm$.

\begin{thm}\label{thm:main}
Assume that $\rhobar_\fm$ is irreducible
and that, if $\rhobar_\fm$ is unramified at~$p$, then $\rhobar_\fm(\Frob_p)$ is not scalar.

Then there is a Galois representation
$$ \rho_\fm: \Gal(\Qbar/\QQ) \to \GL_2(\TT_{1,\fm})$$
which is unramified outside~$N$ and such that for all primes $\ell \nmid N$ (including $\ell=p$)
the characteristic polynomial of $\rho_\fm(\Frob_\ell)$ is
$X^2 - T_\ell X + \diam \ell \in \TT_{1,\fm}[X]$.
\end{thm}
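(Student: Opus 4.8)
The plan is to obtain $\rho_\fm$ by first constructing the pair (trace, determinant) as a two-dimensional pseudo-representation of $\Gal(\Qbar/\QQ)$ with values in $\TT_{1,\fm}$, unramified outside~$N$ and with the prescribed values at Frobenius elements, and then to invoke Carayol's theorem (equivalently Nyssen--Rouquier): over a local ring, a pseudo-representation whose residual representation is absolutely irreducible is the trace of a genuine representation, unique up to conjugation. As $\rhobar_\fm$ is irreducible over $\Fbar_p$ it is absolutely irreducible, so this last step is legitimate and simultaneously yields uniqueness. All the difficulty is thereby transferred to producing the pseudo-representation, and the one genuinely delicate point is the prime~$p$ itself, since the Frobenius datum there cannot simply be read off from higher weight.

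For the primes $\ell\neq p$ I would import everything from weight~$p$. Multiplication by the Hasse invariant~$A$ (the reduction of $E_{p-1}$, of weight~$p-1$ and $q$-expansion~$1$) embeds $S_1(\Gamma_1(N);\Fbar_p)_\Katz$ into $S_p(\Gamma_1(N);\Fbar_p)_\Katz$ compatibly with every $T_\ell$ for $\ell\neq p$ and with the diamond operators, producing a surjection of the corresponding prime-to-$p$ Hecke algebras. On the weight-$p$ side ($p\geq 2$) the \'etale cohomology of the modular curve carries a commuting Galois action, giving a pseudo-representation over the weight-$p$ Hecke algebra with $\Frob_\ell$ having trace $T_\ell$ and determinant $\diam\ell\,\ell^{p-1}$ for $\ell\nmid Np$. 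Pushing this through the surjection, and using that $\ell^{p-1}=1$ in characteristic~$p$ for $\ell\neq p$, I obtain a pseudo-representation over $\TT_{1,\fm}$, unramified outside~$Np$, whose $\Frob_\ell$-datum is $(T_\ell,\diam\ell)$ for all $\ell\nmid Np$; by the Chebotarev density theorem its values lie in the subalgebra generated by these $T_\ell$, hence inside $\TT_{1,\fm}$. Carayol's theorem then already supplies $\rho_\fm$ unramified outside~$Np$ with the asserted characteristic polynomials for $\ell\nmid Np$.

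The main obstacle, and the actual content of the theorem, is to upgrade this to unramifiedness at~$p$ with $\charpoly$ of $\Frob_p$ equal to $X^2 - T_p X + \diam p$. Here the special feature of weight~$1$ is decisive: the Katz relation reads $T_p = U_p + \diam p\,V_p$ (whereas in weight $\geq 2$ one has $T_p = U_p$ modulo~$p$), so the weight-$p$ operator transported above is merely the $U_p$-part and carries no direct information about $\Frob_p$. Recalling from Proposition~\ref{prop:weightone} that $\rhobar_\fm$ is unramified at~$p$, I would instead exploit the doubling phenomenon: the eigenform~$g$ gives rise inside weight~$p$ to a two-dimensional space on which $U_p$ acts with characteristic polynomial $X^2 - a_p X + \epsilon(p)$, whose roots $\alpha,\beta$ are units because their product $\epsilon(p)$ is nonzero. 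Each root is the $U_p$-eigenvalue of an ordinary weight-$p$ form, and ordinarity (Wiles, Hida) forces the local representation at~$p$ to be reducible with an unramified quotient on which a decomposition-group Frobenius acts by that unit. The two companion forms place $\alpha$ and $\beta$ in opposite slots of the two filtrations, so that, when $\alpha\neq\beta$, the resulting unramified sub and unramified quotient are distinct; this pins down $\rho_\fm$ restricted to a decomposition group $G_p$ at~$p$ as a sum of two unramified characters, whence unramifiedness and $\Frob_p$-eigenvalues $\alpha,\beta$. The hypothesis that $\rhobar_\fm$ is not unramified with scalar $\rhobar_\fm(\Frob_p)$ is precisely what excludes the collapsing case $\alpha=\beta$ with scalar Frobenius, in which the two filtrations coincide and the argument would break down.

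The hard work, I expect, lies in running this over the Artinian local ring $\TT_{1,\fm}$ rather than merely over the residue field. When $\alpha\neq\beta$ residually, $\TT_{1,\fm}$ being Henselian lets me factor $X^2 - T_p X + \diam p = (X-\tilde\alpha)(X-\tilde\beta)$ with $\tilde\alpha,\tilde\beta\in\TT_{1,\fm}$ lifting $\alpha,\beta$, produce the two $U_p$-idempotents integrally, and realise the doubling together with the two ordinary filtrations over $\TT_{1,\fm}$; the gluing of the two unramified structures then survives, giving $\rho_\fm|_{G_p}$ unramified with $\charpoly(\Frob_p) = (X-\tilde\alpha)(X-\tilde\beta) = X^2 - T_p X + \diam p$. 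The residual coincidence $\alpha=\beta$, permitted by the hypothesis only when $\Frob_p$ is non-scalar, is the genuinely awkward case and would require handling the two companion structures without a clean idempotent splitting, which is where I anticipate the bulk of the technical effort. Granting this, $\rho_\fm$ is unramified outside~$N$ with the stated characteristic polynomial at every $\Frob_\ell$ including $\ell=p$, and Carayol's uniqueness assembles the local data into the single representation of the statement.
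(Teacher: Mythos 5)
Your outline follows the same two-stage strategy as the paper: first get $\rho_\fm$ over $\TT_{1,\fm}$ from Carayol applied to the prime-to-$p$ part of the weight-$p$ Hecke algebra together with the surjection $\TT_{p,\fm'}'\twoheadrightarrow\TT_{1,\fm}$, then use the doubling of $g$ inside weight~$p$ and the ordinarity of the two roots $\alpha,\beta$ of $X^2-a_pX+\epsilon(p)$ to control the decomposition group at~$p$. Your sketch of the $p$-distinguished case ($\alpha\neq\beta$ residually) is essentially the paper's argument: the idempotent splitting $\TT_{p,\fm'}\cong\TT_{p,\tilde\fm_1}\times\TT_{p,\tilde\fm_2}$ plus Gross's description of the connected component $G^0[p]$ of the relevant $p$-divisible group (unramified over $\QQ_p(\zeta_p)$ with Frobenius acting as $U_p^{-1}$) produces two unramified pieces whose sum is direct and fills up $\rho_\fm\otimes\tilde\epsilon^{-1}$ by a dimension count. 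One point you pass over: the geometric input is only over $\QQ_p(\zeta_p)$, so you still need a descent to $\QQ_p$ killing tame ramification (the paper's Corollary~\ref{cor:tame}, resting on the fact that $\ker(\GL_2(\TT)\to\GL_2(\TT/\fm))$ has no nontrivial elements of order dividing $p-1$); this is short but not automatic.

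The genuine gap is the case you explicitly leave open: $a_p^2=4\epsilon(p)$, i.e.\ $\alpha=\beta$ residually, where $\TT_{p,\fm'}$ is local and there is no idempotent splitting. This cannot be deferred as a technicality: for $p=2$ it is exactly the case excluded from Proposition~\ref{prop:weightone}, so your appeal to that proposition for residual unramifiedness is unavailable there, and it is precisely the new content of the theorem. The paper closes it with a module-theoretic lemma (Proposition~\ref{prop:indec}): if $0\to N\to M\oplus M\to Q\to 0$ is exact over an Artinian local ring with $N,M,Q$ free of rank~$2$ and $\overline{N}$ indecomposable, then $N\cong M$. This is applied with $M=\rho_\fm\otimes\tilde\epsilon^{-1}$, coming from the doubling isomorphism $G[p](\Qbar)\otimes_{\TT_{p,\fm'}'}\TT_{1,\fm}\cong M\oplus M$ --- which itself requires multiplicity one/Gorensteinness of $\TT_{p,\fm'}$, and that is where the hypothesis that $\rhobar_\fm(\Frob_p)$ is not unramified--scalar actually enters, rather than through a ``coinciding filtrations'' heuristic --- and with $N=G^0[p](\Qbar_p)/I_{\fm'}$, whose reduction is indecomposable as a Frobenius module because $U_p$ acts through the non-scalar matrix $\mat{a_p}{1}{-\epsilon(p)}{0}$ with a double eigenvalue. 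Without an argument of this kind in the non-distinguished case your proof does not close.
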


Note that we are not assuming that $\rhobar_\fm$ is unramified at~$p$, but, that this can
be deduced from the theorem.
This removes the condition in the case $p=2$ from Proposition~\ref{prop:weightone}.

\begin{cor}\label{cor:main}
The representation $\rhobar_g$ is unramified outside~$N$
and the characteristic polynomial of $\rhobar_g(\Frob_\ell)$
equals $X^2 - a_\ell X + \epsilon(\ell)$
for all primes $\ell \nmid N$, including $\ell = p$.
\end{cor}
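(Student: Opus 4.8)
The plan is to deduce the corollary from Theorem~\ref{thm:main} simply by reducing $\rho_\fm$ modulo~$\fm$, once the configurations excluded in the theorem have been dealt with separately. First I would record that $\rhobar_g = \rhobar_\fm$ is already known, from the construction recalled in the introduction, to be unramified outside~$Np$ with $\charpoly(\rhobar_g(\Frob_\ell)) = X^2 - a_\ell X + \epsilon(\ell)$ for every prime $\ell \nmid Np$. Hence the only new content to establish is unramifiedness at~$p$ together with the value of the characteristic polynomial of $\rhobar_g(\Frob_p)$.

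In the main case, where $\rhobar_\fm$ is irreducible and not unramified at~$p$ with scalar Frobenius, I would compose $\rho_\fm$ with the quotient map $\TT_{1,\fm} \twoheadrightarrow \TT_{1,\fm}/\fm\TT_{1,\fm}$. Its target is $\Fbar_p$, since $\TT_1/\fm = \Fbar_p$ by the very definition of~$\fm$, so this produces a representation $\Gal(\Qbar/\QQ) \to \GL_2(\Fbar_p)$ that is unramified outside~$N$ and has characteristic polynomial $X^2 - a_\ell X + \epsilon(\ell)$ at $\Frob_\ell$ for every $\ell \nmid N$, now including $\ell = p$. For $\ell \nmid Np$ these polynomials coincide with those of $\rhobar_g$, so Brauer--Nesbitt together with the Chebotarev density theorem forces the two semisimplifications to agree; as $\rhobar_g$ is irreducible, the reduction $\rho_\fm \bmod \fm$ is then isomorphic to $\rhobar_g$ as a representation, and unramifiedness at~$p$ and the asserted characteristic polynomial transfer to $\rhobar_g$.

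Next I would dispose of the two excluded configurations. If $\rhobar_\fm$ is irreducible and already unramified at~$p$ with $\rhobar_\fm(\Frob_p)$ scalar, there is nothing to prove for ramification, and the characteristic polynomial should be pinned down from $\Det \rhobar_g = \epsilon$ (which is unramified at~$p$, as $p \nmid N$, giving the determinant $\epsilon(p)$) together with $\mathrm{tr}\,\rhobar_g(\Frob_p) = a_p$. If $\rhobar_\fm$ is reducible, I would argue that its semisimplification is a direct sum of two Galois characters of conductor dividing~$N$, hence unramified at~$p$; alternatively, every case other than $p = 2$ with $a_p^2 = 4\epsilon(p)$ is already covered by Proposition~\ref{prop:weightone}, so in fact only this last configuration genuinely requires the new argument.

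The step I expect to be the real obstacle is not the reduction itself but the verification, in the excluded cases, that once $\rhobar_g$ is unramified at~$p$ its Frobenius has trace exactly~$a_p$. This is the delicate link between the $T_p$-eigenvalue and $\Frob_p$ that is special to weight~$1$ and that, in the classical setting, is supplied by the companion-forms theory underlying Proposition~\ref{prop:weightone}; everything else in the argument is formal.
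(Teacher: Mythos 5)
Your proposal takes essentially the same route as the paper, whose entire proof is the two-case split you describe: if $\rhobar_\fm$ is reducible it is declared unramified at~$p$ simply because it is semisimple, and if it is irreducible the result is obtained by reducing $\rho_\fm$ from Theorem~\ref{thm:main} modulo~$\fm$. The difficulties you flag in the excluded configurations --- in particular pinning down the trace of $\rhobar_g(\Frob_p)$ when $\rhobar_\fm$ is unramified at~$p$ with scalar Frobenius --- are genuine, but the paper's two-sentence proof does not engage with them either, so your version is, if anything, the more careful one.
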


\begin{proof}
If $\rhobar_\fm = \rhobar_g$ is reducible, then it is automatically unramified at~$p$ as it is semisimple.
If $\rhobar_\fm$ is irreducible, the result follows by reducing $\rho_\fm$
(from Theorem~\ref{thm:main}) modulo~$\fm$.
\end{proof}

The proof of Theorem~\ref{thm:main} will be given in Section~\ref{sec:proof}
and we will illustrate the theorem with examples
in Section~\ref{sec:examples}.
The essential point that makes the proof work is that cusp forms of weight~$1$
over~$\Fbar_p$ sit in weight~$p$ in two different ways; on $q$-expansions the situation is
precisely the same as in the theory of oldforms, when passing from level~$N$ to level~$Np$.
Let us call this `doubling'. We shall see that it leads to a `doubling of Hecke
algebras' and finally to a `doubling of Galois representations'. It is from the latter that
we deduce the main statement.

In the proof of Theorem~\ref{thm:main} we use essentially that $\rhobar_\fm$
satisfies multiplicity one (see Section~\ref{sec:galois}); hence, the case when
$\rhobar_\fm$ is unramified at~$p$ with scalar $\rhobar_\fm(\Frob_p)$, where
multiplicity one fails by Corollary~4.5 of~\cite{multiplicities}, has to remain open here.

Since the present article was finished and first put on arXiv (arXiv:1102.2302), I made some
unsuccessful efforts to remove the multiplicity one-assumption. Since then,
Frank Calegari and David Geraghty released an impressive preprint~\cite{CG}, in which they manage
to extend Theorem~\ref{thm:main} to all cases (for odd primes~$p$)
and, moreover, achieve an $R=\TT$-theorem,
using a detailed analysis of the local deformation rings.
They also prove that the relevant multiplicity is $2$ if it is not~$1$,
completing the main result of~\cite{multiplicities}.

We finish this introduction by expressing our optimism that the methods of the present paper
might generalise to some extent to Hilbert modular forms. We intend to investigate
this in future work.

\subsection*{Acknowledgements}

I would like to thank Mladen Dimitrov for clarifying discussions and Jean-Pierre Serre
for comments and hints on analysing the examples. I also thank Kevin Buzzard for some comments
on the first version of this article.
Thanks are also due to the referee for a careful reading, pointing out a notational
ambiguity and some helpful remarks.

I acknowledge partial support during the writing of this work by the SFB/TRR 45 and the SPP 1489
of the Deutsche Forschungsgemeinschaft.

\section{Modular forms and Hecke algebras of weight one}\label{sec:mf}

In this section we provide the statements on modular forms and Hecke algebras
that are needed for the sequel. In particular, we deduce a `doubling of Hecke algebras'
from a `doubling of modular forms'.

We shall use the following notation and assumptions throughout the article.

\begin{notat}\label{notat:mf}
\begin{itemize}
\item Let $p$ be a prime number and $N\ge 5$ an integer not divisible by~$p$.

\item $\Frob_\ell$ always denotes an arithmetic Frobenius element at~$\ell$.

\item $\zeta_n$ always denotes a primitive $n$-th root of unity (for $n \in \NN$).

\item If $R$ is a ring, $M$, $N$ are $R$-modules and $S \subseteq M$ is a subset, then we put
$$\Hom_R(M,N)^{S=0} := \{ f \in \Hom_R(M,N) \;|\; f(s) = 0 \; \forall \, s \in S\}.$$
\end{itemize}
\end{notat}

\subsection*{Katz modular forms}

For the treatment in this article, it is essential to dispose of the geometric
definition of modular forms given by Katz. Since the tools we need are nicely
exposed in \cite{edix-jussieu}, we follow this article, and, in particular, we work
with Katz modular cusp forms for the moduli problem $[\Gamma_1(N)]_{\FF_p}'$
of elliptic curves with an embedding of the group scheme $\mu_N$.
We use the notation $S_k(\Gamma_1(N); \FF_p)_\Katz$ for these. Replacing $\FF_p$
by a field extension $\FF$ of~$\FF_p$, one also defines $S_k(\Gamma_1(N); \FF)_\Katz$.
By flatness,
$S_k(\Gamma_1(N); \FF)_\Katz = \FF \otimes_{\FF_p} S_k(\Gamma_1(N); \FF_p)_\Katz$.

Let
$\TT_k(\Gamma_1(N);\FF_p)_\Katz$ be the $\FF_p$-subalgebra of
$\End_{\Fbar_p}(S_k(\Gamma_1(N);\Fbar_p)_\Katz)$
generated by all Hecke operators $T_n$ and let $\TT_k'(\Gamma_1(N);\FF_p)_\Katz$
be its subalgebra generated only by those $T_n$ with $p \nmid n$.
Note that both contain the diamond operators due to the formula
$\ell^{k-1}\diam{\ell} = T_{\ell}^2 - T_{\ell^2}$ for a prime~$\ell$.
The $q$-expansion principle (see e.g.\ \cite{DI}, Theorem~12.3.4) and the formula $a_1(T_nf)=a_n(f)$
show that the pairing
$\TT_k(\Gamma_1(N);\FF_p)_\Katz \times S_k(\Gamma_1(N); \FF_p)_\Katz \to \FF_p$,
given by $\langle T,f\rangle = a_1(Tf)$ is nondegenerate and, thus, provides the
identification
\begin{equation}\label{eq:katz}
\Hom_{\FF_p}(\TT_k(\Gamma_1(N);\FF_p)_\Katz,\FF)
\xrightarrow{\varphi \mapsto \sum_{n=1}^\infty \varphi(T_n) q^n}
S_k(\Gamma_1(N); \FF)_\Katz
\end{equation}
for $\FF/\FF_p$.

\subsection*{Classical modular forms}

It is useful to point out the relation with classical holomorphic cusp forms,
for which we use the notation $S_k(\Gamma_1(N))$ and $S_k(\Gamma_1(N))_\cl$.
The corresponding Hecke algebra $\TT_k(\Gamma_1(N))_\cl$ is defined as the
$\ZZ$-subalgebra of $\End_\CC(S_k(\Gamma_1(N))_\cl)$ generated by all
Hecke operators~$T_n$.
By the existence of an integral structure and the $q$-expansion principle,
the map $\Hom_\ZZ(\TT_k(\Gamma_1(N))_\cl,\CC) \to S_k(\Gamma_1(N))_\cl$
which sends a map $\varphi$ to the Fourier series $\sum_{n=1}^\infty \varphi(T_n) q^n$
with $q=q(z)=e^{2 \pi i z}$ is an isomorphism. We let
$S_k(\Gamma_1(N);R)_\cl = \Hom_\ZZ(\TT_k(\Gamma_1(N))_\cl,R)$ for any $\ZZ$-algebra~$R$.
Note $S_k(\Gamma_1(N);\CC)_\cl = S_k(\Gamma_1(N))_\cl$.
Note also that due to the freeness and the finite generation of $\TT_k(\Gamma_1(N))_\cl$ as a
$\ZZ$-module
\begin{equation}\label{eq:classical}
R_2 \otimes_{R_1} S_k(\Gamma_1(N);R_1)_\cl \cong S_k(\Gamma_1(N);R_2)_\cl
\end{equation}
for any $R_1$-algebra $R_2$.
For a subring $R \subseteq \CC$ the $R$-module $S_k(\Gamma_1(N);R)_\cl$ agrees
with the subset of $S_k(\Gamma_1(N))$ consisting of those forms with
$q$-expansion having coefficients in~$R$, as e.g.\ in~\cite{DI}, Section~12.3.

The following proposition states that for weights at least~$2$, Katz cusp forms over~$\Fbar_p$
coincide with reductions of classical ones of the same level~$\Gamma_1(N)$.

\begin{prop}\label{prop:comparison}
Let $k \ge 2$. Assume $N \neq 1$ or $p \ge 5$.
\begin{enumerate}[(a)]
\item\label{prop:comparison:a}
There is an isomorphism
$S_k(\Gamma_1(N);\FF)_\cl \cong S_k(\Gamma_1(N);\FF)_\Katz$ which is compatible with
the Hecke operators and $q$-expansions for any $\FF/\FF_p$.
\item\label{prop:comparison:b}
The map
$\FF_p \otimes_\ZZ\TT_k(\Gamma_1(N))_\cl \xrightarrow{1 \otimes T_n \mapsto T_n}
       \TT_k(\Gamma_1(N);\FF_p)_\Katz$ is an isomorphism of rings.
\end{enumerate}
\end{prop}

\begin{proof}
(\ref{prop:comparison:a}) By Theorem~12.3.2 of~\cite{DI}
(see also Lemma~1.9 of~\cite{edix-FLT} for the cases $N<5$) one has
$$ \FF_p \otimes_{\ZZ[\frac{1}{N}]} S_k(\Gamma_1(N);\ZZ[\frac{1}{N}])_\Katz
\cong S_k(\Gamma_1(N);\FF_p)_\Katz$$
compatible with the Hecke operators. By \cite{edix-jussieu}, 4.7, one also has
$$ S_k(\Gamma_1(N);\ZZ[\frac{1}{N}])_\Katz \cong S_k(\Gamma_1(N);\ZZ[\frac{1}{N}])_\cl.$$
Both identifications respect $q$-expansions.
Invoking them together with Equation~\eqref{eq:classical} gives the statement.

(\ref{prop:comparison:b}) From~(\ref{prop:comparison:a}) it follows that
$\FF_p \otimes_\ZZ\TT_k(\Gamma_1(N))_\cl \xrightarrow{1 \otimes T_n \mapsto T_n}
\TT_k(\Gamma_1(N);\FF_p)_\Katz$
is a surjection of rings. To see it is an isomorphism it suffices to invoke
Equations \eqref{eq:katz} and~\eqref{eq:classical} to give:
\begin{multline*}
\Hom_{\FF_p}(\TT_k(\Gamma_1(N);\FF_p)_\Katz,\FF_p) \cong
S_k(\Gamma_1(N);\FF_p)_\Katz \overset{\textnormal{(\ref{prop:comparison:a})}}{\cong}\\
S_k(\Gamma_1(N);\FF_p)_\cl \cong
\Hom_{\ZZ}(\TT_k(\Gamma_1(N))_\cl,\FF_p) \cong
\Hom_{\FF_p}(\FF_p \otimes_\ZZ\TT_k(\Gamma_1(N))_\cl,\FF_p),
\end{multline*}
which is the map induced from $1 \otimes T_n \mapsto T_n$ due to the compatibility
of $q$-expansions.
\end{proof}

Note that the corresponding statement for weight $k=1$ is false.
We shall explain examples in Section~\ref{sec:examples}.
That failure is actually {\em la raison d'\^{e}tre} of this article.

\subsection*{Doubling of weight one forms}

Towards the goal of this article, the construction and study of a Galois representation
into the weight~$1$ Hecke algebra, it is necessary to increase the
weight, since weight~$1$ is not a cohomological weight. The increased weight will
enable us to see the Galois representation on the Jacobian of a modular curve,
thus, permitting the use of geometric tools.

We shall map weight~$1$ forms into weight~$p$. This can be done in two different ways:
multiplying by the Hasse invariant $A$ (a modular form over $\FF_p$ of weight $p-1$
with $q$-expansion~$1$); the Frobenius $F(f)= f^p$. The former does not change the
$q$-expansion and the latter maps $\sum_{n=1}^\infty a_n q^n$ to $\sum_{n=1}^\infty a_n q^{np}$.
The formula for~$F$ is clear for modular forms over~$\FF_p$; if we work with coefficients
in $\FF/\FF_p$, then as in~\cite{edix-jussieu} we take the $\FF$-linear extension of~$F$.
Note that on the level of $q$-expansions, the two maps $A$ and $F$ correspond precisely to
the two degeneracy maps from level $N$ to~$Np$. Hence, weight one forms in weight~$p$
are very analogous to oldforms. That is the price to pay for the use of geometric tools.

Let $\FF/\FF_p$ and consider the map
\begin{equation}\label{eq:psi}
\Psi: \big(S_1(\Gamma_1(N);\FF)_\Katz\big)^{\oplus 2} \rightarrow S_p(\Gamma_1(N);\FF)_\Katz,\;\;\;
(f,g) \mapsto A(f) + F(g) = Af + g^p.
\end{equation}
By Proposition~4.4 of~\cite{edix-jussieu} this is an injection. We shall write $T_p$
for the Hecke operator in weight~$1$ and $U_p$ for the one in weight~$p$. According to
Equation~(4.2) of loc.~cit.\ one has
\begin{equation}\label{eq:hecke}
   \diam a \circ \Psi = \Psi \circ \mat{\diam a}00{\diam a}, \;\;\;
   T_n \circ \Psi = \Psi \circ \mat{T_n}00{T_n}, \;\;\;
   U_p \circ \Psi = \Psi \circ \mat{T_p}{\id}{-\diam p}{0}
\end{equation}
for $p\nmid n$ and $a \in \ZZ/N\ZZ^\times$.

\subsection*{The weight one Hecke algebra and doubling of Hecke algebras}

From now on we use the abbreviations
$\TT_k$ and $\TT_k'$ for $\TT_k(\Gamma_1(N);\FF_p)_\Katz$
and $\TT_k'(\Gamma_1(N);\FF_p)_\Katz$, respectively.
Note that Equation~\eqref{eq:hecke} implies that $\TT_p'$ acts on
$S_1(\Gamma_1(N);\FF)_\Katz$ via the embedding given by~$A$.
In particular, mapping $T_n \mapsto T_n$
for $p \nmid n$ defines a ring surjection $\TT_p' \twoheadrightarrow \TT_1'$.
Define
$$ I := \TT_p' \cap U_p \TT_p',$$
where the intersection is taken inside $\TT_p$.
We shall see in Corollary~\ref{cor:ideal}~(\ref{cor:ideal:c})
that $I$ is the kernel of the previous surjection.

\begin{lem}\label{lem:ideal}
\begin{enumerate}[(a)]
\item\label{lem:ideal:a}
Inside $S_p(\Gamma_1(N);\FF)_\Katz$ the equality
$\Hom_{\FF_p}(\TT_p,\FF)^{\TT_p'=0} = FS_1(\Gamma_1(N);\FF)_\Katz$
holds (via Equation~\eqref{eq:katz}).

\item\label{lem:ideal:b}
Inside $S_p(\Gamma_1(N);\FF)_\Katz$ the equality
$U_p\Hom_{\FF_p}(\TT_p,\FF)^{\TT_p'=0} = AS_1(\Gamma_1(N);\FF)_\Katz$
holds (via Equation~\eqref{eq:katz}).
\end{enumerate}
\end{lem}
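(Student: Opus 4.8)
The plan is to verify both equalities on $q$-expansions, using the identification~\eqref{eq:katz} of $\Hom_{\FF_p}(\TT_p,\FF)$ with $S_p(\Gamma_1(N);\FF)_\Katz$ via $\varphi\mapsto f_\varphi=\sum_n\varphi(T_n)q^n$, under which $a_n(f_\varphi)=\varphi(T_n)$. The first task is to translate the left-hand side of~(\ref{lem:ideal:a}) into a condition on $q$-expansions. Since $\TT_p'$ is generated by the $T_n$ with $p\nmid n$, and since, by Hecke multiplicativity together with $\ell^{p-1}\diam\ell=T_\ell^2-T_{\ell^2}$, every product of such operators again lies in the $\FF_p$-span of the $T_n$ with $p\nmid n$, the subalgebra $\TT_p'$ equals that span as an $\FF_p$-module. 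Hence, by the definition of the $S$-kernel in Notation~\ref{notat:mf}, a homomorphism $\varphi$ lies in $\Hom_{\FF_p}(\TT_p,\FF)[\TT_p']$ precisely when $\varphi$ vanishes on every $T_n$ with $p\nmid n$, i.e.\ when $a_n(f_\varphi)=0$ for all such $n$; equivalently, $f_\varphi$ has $q$-expansion in $\FF\llbracket q^p\rrbracket$. Thus~(\ref{lem:ideal:a}) becomes the assertion that the weight-$p$ cusp forms over~$\FF$ whose $q$-expansion is a power series in~$q^p$ are exactly the forms $F(g)=g^p$ with $g\in S_1(\Gamma_1(N);\FF)_\Katz$.

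The inclusion $FS_1(\Gamma_1(N);\FF)_\Katz\subseteq\Hom_{\FF_p}(\TT_p,\FF)[\TT_p']$ is then immediate, because $F(g)=g^p$ has $q$-expansion $\sum_n a_n(g)q^{np}$, which is supported on the multiples of~$p$.

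The reverse inclusion is the substantial point, and I expect it to be the main obstacle: one must show that a weight-$p$ cusp form over~$\FF$ with $q$-expansion in $\FF\llbracket q^p\rrbracket$ is necessarily a $p$-th power of a weight-one form. This is precisely the statement that, on cusp forms, the kernel of the Katz theta operator $\theta=q\frac{d}{dq}$ coincides with the image of the Frobenius~$F$, and I would invoke Katz's theory here (as developed in the sources of~\cite{edix-jussieu}). Note that the injectivity of $\Psi$ supplied by Proposition~4.4 of~\cite{edix-jussieu} does not by itself yield this converse, since a priori such a form need not even lie in the image of $\Psi$; genuine input about the theta operator and the Frobenius image is required.

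Finally, part~(\ref{lem:ideal:b}) would follow formally from~(\ref{lem:ideal:a}) together with the Hecke relation~\eqref{eq:hecke}. Applying the matrix $\mat{T_p}{\id}{-\diam p}{0}$ to the vector $(0,g)$ yields $(g,0)$, so that~\eqref{eq:hecke} gives $U_p\Psi(0,g)=\Psi(g,0)$, that is $U_p\bigl(F(g)\bigr)=A(g)$ for every $g\in S_1(\Gamma_1(N);\FF)_\Katz$. Consequently $U_p$ maps $FS_1(\Gamma_1(N);\FF)_\Katz$ onto $AS_1(\Gamma_1(N);\FF)_\Katz$, and since by~(\ref{lem:ideal:a}) the former equals $\Hom_{\FF_p}(\TT_p,\FF)[\TT_p']$, we obtain $U_p\Hom_{\FF_p}(\TT_p,\FF)[\TT_p']=AS_1(\Gamma_1(N);\FF)_\Katz$, as claimed. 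The only step carrying nontrivial content is thus the reverse inclusion in~(\ref{lem:ideal:a}); everything else is a formal manipulation of the $q$-expansion identification and the Hecke relations.
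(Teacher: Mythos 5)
Your proof is correct and follows essentially the same route as the paper: both reduce part~(a) to the assertion that a weight-$p$ cusp form whose $q$-expansion lies in $\FF\llbracket q^p\rrbracket$ belongs to $FS_1(\Gamma_1(N);\FF)_\Katz$, which is exactly the statement about the kernel of $\Theta$ from the main theorem of \cite{katz-modp} (the ``Katz theory'' you invoke), and both deduce part~(b) from the identity $U_pF=A$ coming from Equation~\eqref{eq:hecke}. You correctly isolate the single nontrivial input and rightly observe that the injectivity of $\Psi$ alone would not yield the reverse inclusion.
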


\begin{proof}
(\ref{lem:ideal:a}) As $\TT_p'$ is generated as $\FF_p$-vector space by the Hecke
operators $T_n$ for $p \nmid n$, the left hand side is equal to
$\{f \in S_p(\Gamma_1(N);\FF)_\Katz \;|\; a_n(f)=0 \; \forall n \textnormal{ s.t.\ } p \nmid n\}$.
As this is precisely the kernel of $\Theta$ defined in~\cite{katz-modp}, part~(3)
of the main theorem of loc.~cit.\
implies that it is equal to $FS_1(\Gamma_1(N);\FF)_\Katz$.
(\ref{lem:ideal:b}) follows from Equation~\eqref{eq:hecke}, namely one has $U_pF=A$.
\end{proof}

\begin{prop}\label{prop:ideal}
\begin{enumerate}[(a)]
\item\label{prop:ideal:e}
Let $f \in S_1(\Gamma_1(N);\FF)_\Katz$ satisfy $a_n(f)=0$ whenever $p \nmid n$. Then $f=0$.
\item\label{prop:ideal:a}
$\TT_1' = \TT_1$.
\item\label{prop:ideal:b}
$\TT_p = \TT_p' + U_p \TT_p'$ as $\TT_p'$-modules.
\item\label{prop:ideal:c}
There are $T,D \in \TT_p'$ such that $U_p^2 - T U_p + D=0$ in $\TT_p$.
\item\label{prop:ideal:d}
$I$ is an ideal of $\TT_p$.
\end{enumerate}
\end{prop}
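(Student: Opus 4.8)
The five assertions cascade, so the plan is to prove them in the displayed order, each feeding the next. For the vanishing assertion (a), I would multiply $f$ by the Hasse invariant $A$. As $A$ has $q$-expansion $1$, the form $Af$ has the same $q$-expansion as $f$, so $a_n(Af)=0$ for all $n$ with $p\nmid n$; Lemma~\ref{lem:ideal}\,(\ref{lem:ideal:a}) then places $Af$ in $FS_1(\Gamma_1(N);\FF)_\Katz$, while trivially $Af\in AS_1(\Gamma_1(N);\FF)_\Katz$. Since the injectivity of $\Psi$ in \eqref{eq:psi} is exactly the statement $AS_1\cap FS_1=0$ (and that multiplication by $A$ is injective), we conclude $Af=0$ and hence $f=0$.

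Next, the equality $\TT_1'=\TT_1$ (b) I would obtain by dualising (a). The inclusion $\TT_1'\hookrightarrow\TT_1$ is surjective if and only if the restriction map $\Hom_{\FF_p}(\TT_1,\FF_p)\to\Hom_{\FF_p}(\TT_1',\FF_p)$ is injective. If $\varphi\colon\TT_1\to\FF_p$ kills $\TT_1'$, then in particular $\varphi(T_n)=0$ for all $p\nmid n$, so by \eqref{eq:katz} the weight-one form $\sum_n\varphi(T_n)q^n$ has all coefficients away from $p$ equal to zero; by (a) it vanishes, whence $\varphi=0$.

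The decomposition $\TT_p=\TT_p'+U_p\TT_p'$ (c) is the heart of the matter, and it is here that I expect the main obstacle. The tempting route is \eqref{eq:hecke}: on the doubled subspace $W:=\Psi\big((S_1(\Gamma_1(N);\FF)_\Katz)^{\oplus2}\big)$ the operator $U_p$ is the matrix $\mat{T_p}{1}{-\diam p}0$, so Cayley--Hamilton gives $U_p^2-T_pU_p+\diam p=0$ on $W$ (with $T_p$ the weight-one operator, which by (b) lies in $\TT_1'=\TT_1$). The difficulty is that this only records a relation on $W$: one checks $\TT_p\cdot W=W$, and this is a proper subspace of $S_p(\Gamma_1(N);\FF)_\Katz$ as soon as genuinely new weight-$p$ forms occur, so $\TT_p$ does not act faithfully on $W$ and the computation places nothing in $\TT_p$. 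I would instead dualise, using the perfect pairing $\TT_p\times S_p(\Gamma_1(N);\FF)_\Katz\to\FF$, $(X,h)\mapsto a_1(Xh)$, from \eqref{eq:katz}, which realises $S_p(\Gamma_1(N);\FF)_\Katz$ as the dual of $\TT_p$ after extending scalars; thus $V:=\TT_p'+U_p\TT_p'$ equals $\TT_p$ as soon as $V^\perp=0$. Now $h\in V^\perp$ means $a_1(sh)=0$ and $a_1(U_psh)=a_1(s\,U_ph)=0$ for every $s\in\TT_p'$ (using commutativity of $\TT_p$), i.e.\ by Lemma~\ref{lem:ideal}\,(\ref{lem:ideal:a}) that both $h$ and $U_ph$ lie in $FS_1$. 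Writing $h=Fg$ and using $U_pF=A$ (Lemma~\ref{lem:ideal}\,(\ref{lem:ideal:b})), the requirement $U_ph=Ag\in FS_1$ forces $a_n(g)=0$ for all $p\nmid n$, whence $g=0$ by (a) and $h=0$. Thus $V^\perp=0$ and $V=\TT_p$.

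The last two parts are then formal. For the quadratic relation (d): by (c) we may write $U_p^2=U_pT-D$ with $T,D\in\TT_p'$, and since $\TT_p$ is commutative this reads $U_p^2-TU_p+D=0$. For the ideal property (e), I would use $\TT_p=\TT_p'+U_p\TT_p'$ to reduce $\TT_pI\subseteq I$ to $\TT_p'I\subseteq I$ and $U_pI\subseteq I$. The first holds because for $s\in\TT_p'$ and $x=U_py\in I$ one has $sx=U_p(sy)\in U_p\TT_p'$ and $sx\in\TT_p'$, so $sx\in I$. For the second, $U_px=U_p^2y=(TU_p-D)y=Tx-Dy$ lies in $\TT_p'$ by (d) while remaining in $U_p\TT_p'$, so $U_px\in I$; giving $U_pI\subseteq I$ and hence $U_p\TT_p'I=U_p(\TT_p'I)\subseteq U_pI\subseteq I$. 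Together with $\TT_p'I\subseteq I$ and (c) this yields $\TT_pI=\TT_p'I+U_p\TT_p'I\subseteq I$, so $I$ is an ideal.
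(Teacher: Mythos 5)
Your proof is correct and follows the same overall strategy as the paper's: everything is driven by part (a) via the duality \eqref{eq:katz} between the Hecke algebra and the space of forms, and your arguments for (b), (d) and (e) coincide with the paper's, while your treatment of (c) (killing $V^\perp$ by showing $h$ and $U_ph$ both lie in $FS_1$ and using $U_pF=A$) is the paper's argument -- which phrases the same condition as $a_n(g)=0$ unless $p^2\mid n$ -- in dual clothing. The one genuine divergence is in (a) itself: the paper invokes Katz's theorem on $\ker\Theta$ directly in weight one (a nonzero such $f$ would have to be a Frobenius of a form of weight $1/p$, which does not exist), whereas you deduce it from the weight-$p$ instance of that theorem already packaged in Lemma~\ref{lem:ideal}~(\ref{lem:ideal:a}) together with the injectivity of $\Psi$ from \eqref{eq:psi}, i.e.\ $AS_1\cap FS_1=0$. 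Both inputs are available in the paper's framework, so either route is fine; yours has the mild advantage of quoting Katz's theorem only in the single weight $p$ where Lemma~\ref{lem:ideal} needs it anyway, at the cost of also using Proposition~4.4 of \cite{edix-jussieu}. Your preliminary remark on why Cayley--Hamilton on the doubled subspace does not directly yield (c) is accurate and well taken, though not needed for the proof.
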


\begin{proof}
(\ref{prop:ideal:e}) The theorem of~\cite{katz-modp} already used in the previous proof
gives a contradiction for $f \neq 0$, since that $f$ is in the kernel of $\Theta$
and has weight~$1$, so that it would have to come from an even smaller weight under Frobenius,
which is impossible.

(\ref{prop:ideal:a}) If $\TT_1/\TT_1'$ were nonzero, then
$S_1(\Gamma_1(N);\FF_p)^{\TT_1'=0} = \Hom_{\FF_p}(\TT_1,\FF_p)^{\TT_1'=0}$
would be nonzero and, hence, there would be a nonzero form
$f \in S_1(\Gamma_1(N);\FF_p)_\Katz$ such that $a_n(f)=0$ whenever $p \nmid n$. This
is, however, excluded by~(\ref{prop:ideal:e}).

(\ref{prop:ideal:b})
Let $g \in S_p(\Gamma_1(N);\FF_p)^{\TT_p' + U_p \TT_p'=0}=\Hom_{\FF_p}(\TT_p, \FF_p)^{\TT_p' + U_p \TT_p'=0}$. Now $g$
satisfies $a_n(g) = 0$ whenever $p^2 \nmid n$.
Thus, there is $f \in S_1(\Gamma_1(N);\FF_p)_\Katz$ such
that $Ff = g$ (again by \cite{katz-modp}) satisfying $a_n(f)=0$ whenever
$p \nmid n$, so that by (\ref{prop:ideal:e}) it is zero, implying the claim.

(\ref{prop:ideal:c}) This is immediate from~(\ref{prop:ideal:b}).

(\ref{prop:ideal:d}) Let $x \in \TT_p' \cap U_p \TT_p'$. Thus, there is $y \in \TT_p'$ such that
$x = U_p y$. We have
$$ U_p x = U_p^2 y = TU_p y - D y = Tx - Dy \in \TT_p',$$
whence $U_px \in I$.
\end{proof}

Let $\fm'$ be a maximal ideal of $\TT_p'$. By $\TT_{p,\fm'}$ and $\TT_{p,\fm'}'$
we denote localisation at~$\fm'$. We also use similar notation in similar circumstances.

\begin{lem}\label{lem:max}
Let $\fm'$ be a maximal ideal of $\TT_p'$.
\begin{enumerate}[(a)]
\item\label{lem:max:a}
The following statements are equivalent:
\begin{enumerate}[(i)]
\item\label{lem:max:a:i}
$\TT_ {p, \fm'}' \neq \TT_{p,\fm'}$.
\item\label{lem:max:a:ii}
There is a normalised eigenform $g \in S_1(\Gamma_1(N);\Fbar_p)_\Katz$ with $q$-expansion
$\sum_{n=1}^\infty a_n q^n$ such that the map
$\TT_p' \xrightarrow{T_n \mapsto a_n} \Fbar_p$ defines a ring homomorphism with kernel~$\fm'$.
\end{enumerate}
If the equivalent statements hold, then we say that {\em $\fm'$ comes from weight~$1$}.

\item\label{lem:max:b}
We have $\TT_{p,\fm'} \cong \prod_{i=1}^n \TT_{p,\tilde{\fm}_i}$ with $n \in \{1,2\}$,
where the $\tilde{\fm}_i$ are the maximal ideals of $\TT_p$ containing~$\fm'$.

If one of the $\tilde{\fm}_i$ is ordinary (meaning that $U_p \in \TT_{p,\tilde{\fm}_i}^\times$),
then all are and we say that $\fm'$ is {\em ordinary}.

Suppose now that $\fm'$ comes from weight one with $g \in S_1(\Gamma_1(N);\Fbar_p)_\Katz$
as in~(\ref{lem:max:a}). Then $\fm'$ is ordinary.
Furthermore, $n=2$ if and only if the polynomial $X^2 - a_p(g)X + \epsilon(p)$
has two distinct roots in $\TT_p'/\fm'$.
\end{enumerate}
\end{lem}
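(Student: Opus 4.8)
The plan is to read everything off the $\TT_p'$-algebra structure of $\TT_p$ from Proposition~\ref{prop:ideal}: by parts~(\ref{prop:ideal:b}),(\ref{prop:ideal:c}), $\TT_p=\TT_p'+U_p\TT_p'$ with $U_p^2-TU_p+D=0$ for some $T,D\in\TT_p'$. After localising, $\TT_{p,\fm'}=\TT_{p,\fm'}'+U_p\TT_{p,\fm'}'$, so statement~(\ref{lem:max:a:i}) is equivalent to $U_p\notin\TT_{p,\fm'}'$, i.e.\ to $(\TT_p/\TT_p')_{\fm'}\neq 0$. Thus (\ref{lem:max:a:i}) holds if and only if $\fm'$ lies in the support of the $\TT_p'$-module $\TT_p/\TT_p'$, and the whole of part~(\ref{lem:max:a}) reduces to computing that support.

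I would compute this support by duality. By Equation~\eqref{eq:katz} we have $S_p(\Gamma_1(N);\Fbar_p)_\Katz\cong\Hom_{\FF_p}(\TT_p,\Fbar_p)$ as $\TT_p'$-modules, and restriction along $\TT_p'\hookrightarrow\TT_p$ gives a surjection onto $\Hom_{\FF_p}(\TT_p',\Fbar_p)$ whose kernel is $\{f : a_m(f)=0\text{ for all }p\nmid m\}$; by Lemma~\ref{lem:ideal}(\ref{lem:ideal:a}) this kernel equals $F S_1(\Gamma_1(N);\Fbar_p)_\Katz$. Dualising the exact sequence $0\to\TT_p'\to\TT_p\to\TT_p/\TT_p'\to0$ therefore identifies $(\TT_p/\TT_p')^\vee$ with $F S_1(\Gamma_1(N);\Fbar_p)_\Katz$, which in turn is isomorphic to $S_1(\Gamma_1(N);\Fbar_p)_\Katz$ as a $\TT_p'$-module since $F$ intertwines the prime-to-$p$ operators by Equation~\eqref{eq:hecke}. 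As a finite-dimensional module and its $\Fbar_p$-dual share the same support over the Artinian ring $\TT_p'$, the modules $\TT_p/\TT_p'$ and $S_1(\Gamma_1(N);\Fbar_p)_\Katz$ have equal support; and the latter, being a faithful $\TT_1'=\TT_1$-module by Proposition~\ref{prop:ideal}(\ref{prop:ideal:a}), is supported exactly on the $\fm'$ containing $\ker(\TT_p'\twoheadrightarrow\TT_1')$, i.e.\ on the maximal ideals coming from a weight-one eigenform as in~(\ref{lem:max:a:ii}). This duality identification — that the gap $\TT_p/\TT_p'$ is, dually, the space of weight-one forms — is the step I expect to be the crux.

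For part~(\ref{lem:max:b}), the isomorphism $\TT_{p,\fm'}\cong\prod_{i}\TT_{p,\tilde{\fm}_i}$ is the decomposition of the Artinian ring $\TT_{p,\fm'}$ into its local factors at the maximal ideals lying over~$\fm'$; the bound $n\le 2$ holds because $\TT_p\otimes_{\TT_p'}\Fbar_p$ (using the residue field $\Fbar_p$ of~$\fm'$) is a quotient of $\Fbar_p[X]/(X^2-\bar T X+\bar D)$ and so has at most two points in its spectrum. If $\fm'$ does not come from weight one, then by part~(\ref{lem:max:a}) one has $\TT_{p,\fm'}=\TT_{p,\fm'}'$, which is local, so $n=1$ and the ordinarity dichotomy is automatic.

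Finally, suppose $\fm'$ comes from $g$. Evaluating $U_p^2-TU_p+D=0$ on the doubled forms through the injection~$\Psi$ and comparing with the Cayley--Hamilton relation of $M=\mat{T_p}{\id}{-\diam p}{0}$ forces $T\mapsto T_p$ and $D\mapsto\diam p$ under $\TT_p'\twoheadrightarrow\TT_1'$, so $\bar T=a_p$ and $\bar D=\epsilon(p)$ modulo~$\fm'$. Because $p\nmid N$ gives $\epsilon(p)\in\Fbar_p^\times$, the product $\bar D=\alpha\beta$ of the two roots of $X^2-a_pX+\epsilon(p)$ is nonzero, whence both roots are nonzero, $U_p$ is a unit modulo every $\tilde{\fm}_i$, and $\fm'$ is ordinary. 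Every $U_p$-eigenvalue on an $\fm'$-eigenform is a root of this quadratic, while $Ag$ and $Fg$ span a plane on which $U_p$ acts through~$M$ and therefore realise each distinct root; consequently $n$ equals the number of distinct roots, which is $2$ exactly when the discriminant $a_p^2-4\epsilon(p)$ is nonzero, as claimed.
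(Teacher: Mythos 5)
Your proof is correct and follows essentially the same route as the paper: part~(\ref{lem:max:a}) rests on Lemma~\ref{lem:ideal}~(\ref{lem:ideal:a}) (i.e.\ Katz's theorem) to identify the dual of $\TT_p/\TT_p'$ with $FS_1(\Gamma_1(N);\Fbar_p)_\Katz \cong S_1(\Gamma_1(N);\Fbar_p)_\Katz$ --- you phrase this as a support computation over the Artinian ring $\TT_p'$ where the paper localises first and extracts an eigenform from the socle, but the content is identical --- and part~(\ref{lem:max:b}) uses the same quadratic relation from Proposition~\ref{prop:ideal}~(\ref{prop:ideal:c}) together with the explicit matrix of $U_p$ from Equation~\eqref{eq:hecke} to obtain $\bar T = a_p$, $\bar D = \epsilon(p)$ and conclude ordinarity from $\epsilon(p) \neq 0$. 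If anything, your final step showing that both roots of $X^2 - a_p X + \epsilon(p)$ are actually realised on the plane spanned by $Ag$ and $Fg$ (so that $n=2$ whenever $a_p^2 \neq 4\epsilon(p)$, not merely $n \le 2$) is spelled out more explicitly than in the paper, which leaves that direction to a pointer to Corollary~\ref{cor:ideal}~(\ref{cor:ideal:a}).
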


\begin{proof}
(\ref{lem:max:a}) Statement~(\ref{lem:max:a:i}) means that $U_p \in \TT_{p,\fm'}$
is not in $\TT_{p,\fm'}'$, i.e.\ that $\TT_{p,\fm'}/\TT_{p,\fm'}' \neq 0$ and,
equivalently,
$S_p(\Gamma_1(N);\FF_p)_{\Katz,\fm'}^{\TT_{p,\fm'}'=0} = \Hom_{\FF_p}(\TT_{p,\fm'},\FF_p)^{\TT_{p,\fm'}'=0} \neq 0$.
This, however, is equivalent to the existence of a cusp form $f \in S_p(\Gamma_1(N);\Fbar_p)_\Katz$
such that $a_n(f)=0$ for all $p\nmid n$ and such that it is an eigenfunction for all
$T_n$ with $p \nmid n$ with eigenvalues corresponding to~$\fm'$.
By the theorem of \cite{katz-modp} used already in the proof
of Lemma~\ref{lem:ideal}, any such is of the
form $f = Fh$ with $h \in S_1(\Gamma_1(N);\Fbar_p)_\Katz$. Hence, there is a normalised
eigenform $g \in S_1(\Gamma_1(N);\Fbar_p)_\Katz$ such that
the $a_n(g)$ are the eigenvalues of $T_n$ on~$f$ for $p \nmid n$, whence~(\ref{lem:max:a:ii}).

Conversely, the existence of~$g$ implies that
$S_p(\Gamma_1(N);\FF_p)_{\Katz,\fm'}^{\TT_{p,\fm'}'=0} \neq 0$,
as it contains~$Fg$, so that $\TT_ {p, \fm'}' \neq \TT_{p,\fm'}$.

(\ref{lem:max:b})
The product decomposition into its localisations is a general fact of Artinian rings.
If $\TT_{p,\fm'}' = \TT_{p,\fm'}$, there is nothing to show. So we assume now that
this equality does not hold.
From Proposition~\ref{prop:ideal}~(\ref{prop:ideal:c}) we have the surjection of rings
$$ \TT_{p,\fm'}'[X]/(X^2-TX+D) \xrightarrow {X \mapsto U_p} \TT_{p,\fm'}.$$
Taking it mod~$\fm'$ yields $\FF[X]/(X^2-\bar{T}X+\bar{D})$ on the left hand side with
$\FF = \TT_p'/\fm'$, which has at most two local factors, depending on whether the
quadratic equation has two distinct roots or a double one. Thus there can at most be
two local factors on the right hand side.
Modulo~$\fm'$, the quadratic polynomial is in fact $X^2 - a_p(g)X + \epsilon(p)$,
which follows from the explicit shape of~$U_p$ given in Equation~\eqref{eq:hecke};
see also Corollary~\ref{cor:ideal}~(\ref{cor:ideal:a}).
The ordinarity is now also clear since $\epsilon(p)$ is non-zero in~$\FF$.
\end{proof}

We remark that it can happen that $a_p(g)^2 \neq 4 \epsilon(p)$
(this is the so-called {\em $p$-distinguished} case), but that nevertheless the algebra
$\TT_{p,\fm'}$ is local because the distinct roots of $X^2 - a_p(g)X + \epsilon(p)$
might only lie in a quadratic extension of~$\FF$.
(In a previous version of this article we had referred to the case
when $\TT_{p,\fm'}$ is non-local as `$p$-distinguished', which was very misleading.)

We assume henceforth that $\fm'$ comes from weight~$1$ and is hence ordinary.
We write $\Psi_{\fm'}$ for the localisation of $\Psi$ (from Equation~\eqref{eq:psi}) at~$\fm'$ and similarly
$I_{\fm'} = \TT_{p,\fm'}' \cap U_p \TT_{p,\fm'}'$.

\begin{cor}\label{cor:ideal}
Let $\fm'$ be a maximal ideal of $\TT_p'$ which comes from weight~$1$.
\begin{enumerate}[(a)]
\item\label{cor:ideal:a}
We have
$\big(S_1(\Gamma_1(N);\FF_p)_{\Katz,\fm'}\big)^{\oplus 2}
\overset{\Psi_{\fm'}}{\cong}  S_p(\Gamma_1(N);\FF_p)_{\Katz,\fm'}^{I_{\fm'}=0}$.
The operator $U_p$ acts on the left hand side as $\mat {T_p}{1}{-\diam p} 0$.
The operator $T_n$ for $p \nmid n$ acts as $\mat {T_n}00{T_n}$.

\item\label{cor:ideal:b}
There is a natural isomorphism $\TT_{p,\fm'}/I_{\fm'} \cong \TT_{1,\fm'} \oplus \TT_{1,\fm'}$
of $\TT_{p,\fm'}'$-modules.
The operator $U_p$ acts on the right hand side as $\mat {T_p}{-\diam p}1 0$.
The operator $T_n$ for $p \nmid n$ acts as $\mat {T_n}00{T_n}$.

\item\label{cor:ideal:c}
The ring homomorphism
$\TT_{p,\fm'}' \xrightarrow{T_n \mapsto T_n} \TT_{1,\fm'}$ is surjective with kernel~$I_{\fm'}$.
\end{enumerate}
\end{cor}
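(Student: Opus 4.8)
The plan is to treat the three parts together, with the single identity
$$ \ker\big(\pi\colon \TT_{p,\fm'}' \twoheadrightarrow \TT_{1,\fm'}\big) = I_{\fm'} $$
as the heart of the matter; here $\pi$ is $T_n \mapsto T_n$ for $p\nmid n$, which is well defined because $\TT_p'$ acts on $S_1$ by~\eqref{eq:hecke} and surjective because $\TT_1 = \TT_1'$ by Proposition~\ref{prop:ideal}~(\ref{prop:ideal:a}). Granting this, part~(\ref{cor:ideal:c}) is immediate. For part~(\ref{cor:ideal:a}) I would invoke the duality~\eqref{eq:katz}, which identifies $S_{p,\fm'}$ with $\TT_{p,\fm'}^\vee$ and hence the $I_{\fm'}$-kernel with $(\TT_{p,\fm'}/I_{\fm'})^\vee$; since $\Psi_{\fm'}$ is injective (localising Proposition~4.4 of~\cite{edix-jussieu}) and intertwines the Hecke action with the matrices of~\eqref{eq:hecke}, it then suffices to identify $\Image(\Psi_{\fm'})$ with the $I_{\fm'}$-kernel. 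Part~(\ref{cor:ideal:b}) I would obtain by dualising~(\ref{cor:ideal:a}): applying $\Hom_{\FF_p}(-,\FF_p)$ to $(S_{1,\fm'})^{\oplus 2}\cong S_{p,\fm'}[I_{\fm'}]=(\TT_{p,\fm'}/I_{\fm'})^\vee$ and using $S_{1,\fm'}^\vee\cong\TT_{1,\fm'}$ (again~\eqref{eq:katz}) yields $\TT_{p,\fm'}/I_{\fm'}\cong\TT_{1,\fm'}^{\oplus 2}$, with the operators acting by the transposes of those in~(\ref{cor:ideal:a}); this is exactly why $U_p$ appears as $\mat{T_p}{-\diam p}10$ rather than as $\mat{T_p}1{-\diam p}0$.

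For the easy inclusion $I_{\fm'}\subseteq\ker\pi$ I would argue directly. Writing $V_0:=F S_1=\{f : a_n(f)=0\ \forall\, p\nmid n\}$ (Lemma~\ref{lem:ideal}~(\ref{lem:ideal:a})), Equation~\eqref{eq:hecke} shows that each $T_n$ with $p\nmid n$ commutes with $A$ and with $F$, so $\TT_p'$ acts on both $AS_1$ and $FS_1$ through~$\pi$. Hence for $x=U_p y\in I_{\fm'}$ with $y\in\TT_{p,\fm'}'$ and any $h$ one finds $xF(h)=F(\pi(x)h)$ on the one hand and $xF(h)=U_p F(\pi(y)h)=A(\pi(y)h)$ on the other (using $U_pF=A$ from Lemma~\ref{lem:ideal}~(\ref{lem:ideal:b})). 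Since $FS_1\cap AS_1=0$ by Proposition~\ref{prop:ideal}~(\ref{prop:ideal:e}) and $F$ is injective, this forces $\pi(x)h=0$ for all $h$, i.e.\ $\pi(x)=0$.

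The hard part, and the step where ordinarity of~$\fm'$ (Lemma~\ref{lem:max}~(\ref{lem:max:b})) is essential, is the reverse inclusion $\ker\pi\subseteq I_{\fm'}$; since $U_p\in\TT_{p,\fm'}^\times$ this amounts to showing $U_p^{-1}x\in\TT_{p,\fm'}'$ for $x\in\ker\pi$. Here I would use that the pairing $(z,f)\mapsto a_1(zf)$ from~\eqref{eq:katz} is perfect, so $\TT_{p,\fm'}'=\big((\TT_{p,\fm'}')^\perp\big)^\perp$, and that $(\TT_{p,\fm'}')^\perp=V_0$ (as $\langle T_m,f\rangle=a_m(f)$). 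Thus $U_p^{-1}x\in\TT_{p,\fm'}'$ is equivalent to $\langle U_p^{-1}x,f\rangle=\langle x,U_p^{-1}f\rangle=0$ for every $f\in V_0$. Now $\det\mat{T_p}1{-\diam p}0=\diam p$ is a unit, so $U_p$ acts bijectively on $\Image(\Psi_{\fm'})$, giving $U_p^{-1}V_0\subseteq U_p^{-1}\Image(\Psi_{\fm'})=\Image(\Psi_{\fm'})$; and $x\in\ker\pi$ acts on $\Image(\Psi_{\fm'})$ as $\mat{\pi(x)}00{\pi(x)}=0$, so $\langle x,\cdot\rangle$ vanishes there. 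Hence $\langle x,U_p^{-1}f\rangle=0$, as needed. I expect this duality-plus-ordinarity argument to be the main obstacle, since it is precisely the point at which the abstract condition ``$x$ kills weight one'' must be converted into the membership ``$x\in U_p\TT_{p,\fm'}'$''.

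Finally, with $\ker\pi=I_{\fm'}$ established I would close part~(\ref{cor:ideal:a}) by the dimension count $\dim_{\FF_p}S_{p,\fm'}[I_{\fm'}]=\dim_{\FF_p}\TT_{p,\fm'}/I_{\fm'}=2\dim_{\FF_p}\TT_{1,\fm'}=\dim_{\FF_p}\Image(\Psi_{\fm'})$, the middle equality coming from the short exact sequence $0\to I_{\fm'}\to\TT_{p,\fm'}'\oplus U_p\TT_{p,\fm'}'\to\TT_{p,\fm'}\to 0$ provided by Proposition~\ref{prop:ideal}~(\ref{prop:ideal:b}) together with $U_p$ being a unit; combined with the inclusion $\Image(\Psi_{\fm'})\subseteq S_{p,\fm'}[I_{\fm'}]$ from the previous paragraph this forces equality. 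Alternatively, one may invoke Matlis duality for the Artinian ring $\TT_{p,\fm'}$: the matrix description gives $\mathrm{Ann}_{\TT_{p,\fm'}}(\Image\Psi_{\fm'})=\ker\pi+U_p\ker\pi=I_{\fm'}$, and a submodule of $\TT_{p,\fm'}^\vee$ is the $J$-kernel for $J$ its own annihilator. The $U_p$- and $T_n$-actions in~(\ref{cor:ideal:a}) are then read off from~\eqref{eq:hecke}, and those in~(\ref{cor:ideal:b}) are their transposes.
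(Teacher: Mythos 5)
Your proof is correct, but it runs in the opposite logical direction to the paper's. You make part~(\ref{cor:ideal:c}), i.e.\ the identity $\ker(\TT_{p,\fm'}'\twoheadrightarrow\TT_{1,\fm'})=I_{\fm'}$, the centrepiece: the easy inclusion via $AS_1\cap FS_1=0$, and the hard inclusion via the adjointness $a_1(U_p^{-1}x\cdot f)=a_1(x\cdot U_p^{-1}f)$ under the perfect pairing of Equation~\eqref{eq:katz}, combined with $(\TT_{p,\fm'}')^\perp=F S_{1,\fm'}$ and the $U_p$-stability of $\Image(\Psi_{\fm'})$; you then get~(\ref{cor:ideal:a}) from the inclusion $\Image(\Psi_{\fm'})\subseteq S_{p,\fm'}[I_{\fm'}]$ plus a dimension count based on $\dim\TT_{p,\fm'}/I_{\fm'}=2\dim\TT_{1,\fm'}$, and~(\ref{cor:ideal:b}) by dualising. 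The paper instead proves~(\ref{cor:ideal:a}) first, rewriting $I_{\fm'}=U_p^{-1}I_{\fm'}=\TT_{p,\fm'}'\cap U_p^{-1}\TT_{p,\fm'}'$ (using ordinarity and that $I$ is an ideal) so that the $I_{\fm'}$-kernel in the dual splits as the direct sum of the $\TT_{p,\fm'}'$-kernel and the $U_p^{-1}\TT_{p,\fm'}'$-kernel, which Lemma~\ref{lem:ideal} identifies with $FS_1$ and $AS_1$; then~(\ref{cor:ideal:b}) follows by duality and~(\ref{cor:ideal:c}) is read off from the matrix form of the $\TT_{p,\fm'}'$-action in~(\ref{cor:ideal:b}). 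The underlying inputs are identical (Katz's theorem via Lemma~\ref{lem:ideal} and Proposition~\ref{prop:ideal}, Equation~\eqref{eq:hecke}, ordinarity, and the duality~\eqref{eq:katz}), so the difference is one of packaging: the paper's decomposition of the $I_{\fm'}$-kernel is shorter once Lemma~\ref{lem:ideal} is in place, while your version isolates exactly where ordinarity enters (converting ``$x$ kills weight one'' into ``$x\in U_p\TT_{p,\fm'}'$'') and gives a self-contained proof of the kernel identity that does not pass through the module isomorphism of~(\ref{cor:ideal:b}).
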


\begin{proof}
(\ref{cor:ideal:a}) By Lemma~\ref{lem:max}, $U_p$ is invertible and
by Proposition~\ref{prop:ideal}~(\ref{prop:ideal:d}), $I_{\fm'}$ is an ideal of~$\TT_{p,\fm'}$.
Thus, we have $I_{\fm'} = U_p^{-1} I_{\fm'} = \TT_{p,\fm'}' \cap U_p^{-1} \TT_{p,\fm'}'$.
Since by Proposition~\ref{prop:ideal}~(\ref{prop:ideal:b})
$\TT_{p,\fm'}'+U_p\TT_{p,\fm'}'=\TT_{p,\fm'}$, we have the natural isomorphism
$\TT_{p,\fm'}/I_{\fm'} \cong \TT_{p,\fm'}/\TT_{p,\fm'}' \oplus \TT_{p,\fm'}/(U_p\TT_{p,\fm'}')$
of $\TT'$-modules. It follows that
\begin{align*}
 \Hom_{\FF_p}(\TT_{p,\fm'},\FF)^{I_{\fm'}=0}
& \cong   \Hom_{\FF_p}(\TT_{p,\fm'},\FF)^{\TT_{p,\fm'}'=0} \oplus \Hom_{\FF_p}(\TT_{p,\fm'},\FF)^{U_p^{-1}\TT_{p,\fm'}'=0}\\
& = \Hom_{\FF_p}(\TT_{p,\fm'},\FF)^{\TT_{p,\fm'}'=0} \oplus
 U_p\Hom_{\FF_p}(\TT_{p,\fm'},\FF)^{\TT_{p,\fm'}'=0} \\
& \cong A(S_1(\Gamma_1(N))_\Katz)_{\fm'} \oplus F(S_1(\Gamma_1(N))_\Katz)_{\fm'}
 = \Image(\Psi_{\fm'}),
\end{align*}
using Lemma~\ref{lem:ideal}. Moreover, Equation~\eqref{eq:katz} gives an isomorphism
$$S_p(\Gamma_1(N);\FF_p)_{\Katz,\fm'}^{I_{\fm'}=0} \cong \Hom_{\FF_p}(\TT_{p,\fm'},\FF)^{I_{\fm'}=0}.$$
The shapes of $U_p$ and $T_n$ are taken from Equation~\eqref{eq:hecke}.

(\ref{cor:ideal:b}) Using Equation~\eqref{eq:katz}, (\ref{cor:ideal:a}) can be reformulated
as an isomorphism
$$ \Hom_{\FF_p}(\TT_{p,\fm'}/I_{\fm'},\FF_p) \cong \Hom_{\FF_p}(\TT_{p,\fm'},\FF_p)^{I_{\fm'}=0}
\cong \Hom_{\FF_p}(\TT_{1,\fm'},\FF_p)^{\oplus 2}.$$
Dualising it yields the statement, and the matrices are just the transposes of the matrices
in the previous part.

(\ref{cor:ideal:c}) The algebra generated by the $T_n$ with $p \nmid n$
on the left hand side of (\ref{cor:ideal:b})
is $\TT_{p,\fm'}'/I_{\fm'}$ and on the right hand side $\TT_{1,\fm'}'$,
which equals $\TT_{1,\fm'}$ by Proposition~\ref{prop:ideal}~(\ref{prop:ideal:a}).
\end{proof}

We refer to (\ref{cor:ideal:b}) as `doubling of Hecke algebras'.
Part (\ref{cor:ideal:c}) is the key for the definition
of the Galois representation with coefficients in~$\TT_{1,\fm'}$.

\subsection*{Passage to weight two}

In order to work on the Jacobian of a modular curve,
we pass from weight~$p$ to weight~$2$, which is only necessary if $p > 2$.
We assume this for this subsection.

\begin{prop}\label{prop:two}
Let $N \ge 5$, $p \nmid N$, $p > 2$ and $\tilde{\fm}$ be an ordinary maximal ideal of
the Hecke algebra $\TT_p(\Gamma_1(N);\FF_p)_\Katz$.
Then there is a unique maximal ideal~$\fm_2$ of $\FF_p \otimes_\ZZ \TT_2(\Gamma_1(Np))_\cl$
such that $T_n \mapsto T_n$ for all~$n$ defines a ring isomorphism
$\TT_p(\Gamma_1(N);\FF_p)_{\Katz,\tilde{\fm}} \cong (\FF_p \otimes_\ZZ \TT_2(\Gamma_1(Np))_\cl)_{\fm_2}$.
\end{prop}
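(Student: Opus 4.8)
The plan is to transport the statement to Katz Hecke algebras, reduce it to a Hecke-equivariant comparison of the underlying spaces of modular forms, and then dualise. First I would rewrite the right-hand side: since $Np \ge 5$, Proposition~\ref{prop:comparison}~(\ref{prop:comparison:b}) applied in weight~$2$ at level~$\Gamma_1(Np)$ identifies $\FF_p \otimes_\ZZ \TT_2(\Gamma_1(Np))_\cl$ with the Katz Hecke algebra $\TT_2(\Gamma_1(Np);\FF_p)_\Katz$ compatibly with every~$T_n$. It therefore suffices to match, for each ordinary~$\tilde\fm$, the localisation $\TT_p(\Gamma_1(N);\FF_p)_{\Katz,\tilde\fm}$ with a localisation of $\TT_2(\Gamma_1(Np);\FF_p)_\Katz$ via $T_n \mapsto T_n$.

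The core input is a comparison, on ordinary parts, between weight~$p$ forms of level~$N$ and weight~$2$ forms of level~$Np$ over~$\Fbar_p$; see \cite{gross} and \cite{katz-modp}, and compare the use of the Hasse invariant and of Frobenius in \eqref{eq:psi}. The correct target component is dictated by the characteristic polynomials: for $\ell \nmid Np$ the constant term $\ell^{k-1}\diam{\ell}$ reduces to $\diam{\ell}$ in weight~$k=p$ at level~$N$ (as $\ell^{p-1}\equiv 1$), whereas in weight~$2$ at level~$Np$ it is $\ell\,\diam{\ell}$; these agree exactly when the $p$-part of the diamond action is $\ell \mapsto \ell^{-1}$. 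Since $p>2$ this character is nontrivial, so the matching forms lie in a diamond-at-$p$ isotypic component disjoint from the forms pulled back from level~$N$; in particular the operator at~$p$ on the weight-$2$ side is $U_p$, and it is matched with the weight-$p$ operator~$U_p$ (the operator at~$p$ in \eqref{eq:hecke}). Ordinarity of~$\tilde\fm$ guarantees $U_p$ is a unit, placing us exactly in the range where this comparison is an isomorphism of ordinary parts, Hecke-equivariantly for all~$T_n$.

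Dualising this isomorphism by means of the $q$-expansion pairing \eqref{eq:katz}, exactly as in the proof of Corollary~\ref{cor:ideal}, turns it into an isomorphism of the two Hecke modules, and hence, by taking images in the respective endomorphism rings, into an isomorphism of the ambient Hecke algebras carrying $T_n$ to $T_n$. To extract~$\fm_2$ I would compose with the residual eigensystem $T_n \mapsto (T_n \bmod \tilde\fm)$ of~$\tilde\fm$: its pullback along $T_n \mapsto T_n$ is an $\FF_p$-algebra homomorphism $\FF_p \otimes_\ZZ \TT_2(\Gamma_1(Np))_\cl \to \Fbar_p$, and I take $\fm_2$ to be its kernel. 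Since a maximal ideal of a Hecke algebra is determined by the eigensystem $(T_n \bmod \fm_2)_n$, this $\fm_2$ is the unique maximal ideal through which the eigensystem of~$\tilde\fm$ factors, giving both existence and uniqueness. Localising the Hecke-algebra isomorphism at~$\tilde\fm$, equivalently at~$\fm_2$, then yields the asserted isomorphism of localisations.

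The main obstacle is the comparison of the two spaces of forms itself, with full equivariance for the operator at~$p$: one must correctly identify the $p$-part of the nebentypus (forcing the twist $\ell \mapsto \ell^{-1}$) and, above all, verify that the natural comparison map is an \emph{isomorphism} on the ordinary part rather than a mere injection. This is precisely where ordinarity is indispensable, as the two weights~$p$ and~$2$ are not congruent modulo~$p-1$ and the comparison fails to be bijective away from the ordinary locus; it is also where the geometry of the Deligne--Rapoport model of $X_1(Np)$ in characteristic~$p$ enters, which is the reason the passage to weight~$2$ is made in the first place.
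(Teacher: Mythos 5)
Your proposal is correct and follows essentially the same route as the paper: the paper's proof is a one-line appeal to Hida's ordinary comparison between weight~$p$ level~$N$ and weight~$2$ level~$Np$ (citing Proposition~\ref{prop:comparison} together with Proposition~2.3 of \cite{nongorenstein}), and your argument simply unpacks the content of that citation, correctly identifying the twist $\ell \mapsto \ell^{-1}$ in the $p$-part of the nebentypus, the role of ordinarity, and the dualisation via \eqref{eq:katz}. The core comparison of ordinary parts that you flag as the main obstacle is exactly the external input the paper also takes from the literature rather than proving.
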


\begin{proof}
This is due to Hida and follows, for instance,
from combining Proposition~\ref{prop:comparison} and \cite{nongorenstein}, Proposition~2.3.
\end{proof}

Remembering $\TT_{p,\fm'}= \prod_{i=1}^n \TT_{p,\tilde{\fm}_i}$ (see Lemma~\ref{lem:max}),
we obtain that after localisation at ordinary $\fm'$, the Hecke algebra $\TT_{p,\fm'}$
acts on the $p$-torsion of the Jacobian of $X_1(Np)$.
We shall henceforth use this action without specifying
the isomorphism from Proposition~\ref{prop:two} explicitly.

\section{The Galois representation of weight one}\label{sec:galois}

In this section we shall construct the Galois representation~$\rho_\fm$, identify it on
the Jacobian of a suitable modular curve and derive that it `doubles' from the `doubling
of Hecke algebras'.

We collect some statements and pieces of notation which are in place for the whole of this
section.

\begin{notat}\label{notat:galois}
Next to Notation~\ref{notat:mf} we use the following pieces of notation and the following assumptions.
\begin{itemize}
\item $\TT_p = \TT_p(\Gamma_1(N);\FF_p)_\Katz$ denotes the full Hecke algebra on $S_p(\Gamma_1(N);\FF_p)_\Katz$
and $\TT_p'$ is its subalgebra generated by those $T_n$ with $p \nmid n$. The $p$-th Hecke operator
is denoted~$U_p$.

\item $\TT_1$ is the Hecke algebra on $S_1(\Gamma_1(N);\FF_p)_\Katz$ and it is equal to $\TT_1'$
(see Proposition~\ref{prop:ideal}).
The $p$-th Hecke operator is denoted~$T_p$.

\item The map $\TT_p' \xrightarrow{T_n \mapsto T_n} \TT_1$ defines a ring surjection with kernel
$I = \TT_p' \cap U_p \TT_p'$ (see Corollary~\ref{cor:ideal}~(\ref{cor:ideal:c})).

\item Let $\fm'$ be a maximal ideal of $\TT_p'$ which comes from weight~$1$ and corresponds
to a normalised eigenform $g \in S_1(\Gamma_1(N);\Fbar_p)_\Katz$ (see Lemma~\ref{lem:max}).
Let $\epsilon$ be the Dirichlet character of~$g$. Then $\fm'$ is ordinary (see Lemma~\ref{lem:max}).
Denote by $\fm$ the maximal ideal of $\TT_1$ the preimage of which in $\TT_p'$ is~$\fm'$,
whence $\TT_p'/\fm' = \TT_1/\fm$.
Then $\fm$ corresponds to the $\Gal(\Fbar_p/\FF_p)$-conjugacy class of~$g$, i.e.\ it is
the kernel of the ring homomorphism $\TT_1 \xrightarrow{T_n \mapsto a_n(g)} \Fbar_p$.

\item Either $\TT_{p,\fm'} \cong \TT_{p,\tilde{\fm}_1} \times \TT_{p,\tilde{\fm}_2}$
(the {\em non-local case}), or $\TT_{p,\fm'}$ is local (see Lemma~\ref{lem:max}).

\end{itemize}
\end{notat}

\subsection*{Existence}

By work of Shimura and Deligne there is a Galois representation
$$\rhobar_\fm= \rhobar_{\fm'}: \Gal(\Qbar/\QQ) \to \GL_2(\TT_1/\fm) = \GL_2(\TT_p'/\fm')$$
characterised by the property that it is unramified outside $Np$ and
$$\charpoly(\rhobar_\fm)(\Frob_\ell) = X^2 - T_\ell X + \diam{\ell} \in \TT_1/\fm[X]\cong\TT_p'/\fm'[X]$$
for all primes $\ell \nmid Np$.
Under the assumption that $\rhobar_\fm$ is absolutely irreducible, Carayol in
\cite{carayol-anneau}, Th\'eor\`eme~3,
shows the existence of a Galois representation
$$\rho_{\fm'}: \Gal(\Qbar/\QQ) \to \GL_2(\TT_{p,\fm'}')$$
characterised by the property that it is unramified outside $Np$ and
$\charpoly(\rho_{\fm'}(\Frob_\ell)) = X^2 - T_\ell X + \diam{\ell} \in \TT_{p,\fm'}'[X]$
for all primes $\ell \nmid Np$.
In fact, the reference gives a twist of this representation.
Later on, we are going to be more precise about which twist it is.
As a general rule, we denote $\rhobar$ a representation with coefficients
in a finite field or $\Fbar_p$ and $\rho$ when the coefficients are in a Hecke algebra.

\begin{prop}\label{prop:existence}
Let $\fm$ be a maximal ideal of $\TT_1$ such that $\rhobar_\fm$ is absolutely
irreducible. Then there is a Galois representation
$$\rho_\fm: \Gal(\Qbar/\QQ) \to \GL_2(\TT_{1,\fm})$$
characterised by the property that it is unramified outside $Np$ and
$\charpoly(\rho_\fm(\Frob_\ell)) = X^2 - T_\ell X + \diam{\ell} \in \TT_{1,\fm}[X]$
for all primes $\ell \nmid Np$.
\end{prop}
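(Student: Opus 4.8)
The plan is to build $\rho_\fm$ by pushing the representation $\rho_{\fm'}$ of Carayol, recalled immediately before this proposition, forward along the ring surjection $\pi: \TT_{p,\fm'}' \twoheadrightarrow \TT_{1,\fm}$ of Corollary~\ref{cor:ideal}~(\ref{cor:ideal:c}) (recall $\TT_{1,\fm'} = \TT_{1,\fm}$ via Proposition~\ref{prop:ideal}~(\ref{prop:ideal:a})), whose kernel is $I_{\fm'}$. No new geometric input is required: all of the difficulty has already been absorbed into the existence of $\rho_{\fm'}$ and into the doubling of Hecke algebras that produced~$\pi$.

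Concretely, I would first let $\fm'$ be the preimage of $\fm$ under the surjection $\TT_p' \twoheadrightarrow \TT_1$; it is a maximal ideal coming from weight~$1$ (Lemma~\ref{lem:max}~(\ref{lem:max:a})), with $\TT_p'/\fm' = \TT_1/\fm$ and hence $\rhobar_{\fm'} = \rhobar_\fm$. The hypothesis that $\rhobar_\fm$ is absolutely irreducible is then exactly the input needed to invoke Carayol's theorem and obtain $\rho_{\fm'}: \Gal(\Qbar/\QQ) \to \GL_2(\TT_{p,\fm'}')$, unramified outside $Np$ with $\charpoly(\rho_{\fm'}(\Frob_\ell)) = X^2 - T_\ell X + \diam{\ell}$ for $\ell \nmid Np$. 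Setting $\rho_\fm := \GL_2(\pi) \circ \rho_{\fm'}$ then produces the desired representation into $\GL_2(\TT_{1,\fm})$.

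It remains to check that both defining properties descend along $\pi$. Unramifiedness outside $Np$ is immediate, since inertia at a prime $\ell \nmid Np$ already acts trivially through $\rho_{\fm'}$ and therefore also through $\rho_\fm$. For the characteristic polynomials, the trace and determinant of a $2 \times 2$ matrix are polynomial in its entries, so $\pi$ commutes with $\charpoly$; hence $\charpoly(\rho_\fm(\Frob_\ell)) = \pi\big(X^2 - T_\ell X + \diam{\ell}\big) = X^2 - T_\ell X + \diam{\ell}$ in $\TT_{1,\fm}[X]$, using $\pi(T_\ell) = T_\ell$ and $\pi(\diam{\ell}) = \diam{\ell}$. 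The word ``characterised'' refers to uniqueness up to conjugation, which follows from the standard fact that a representation into an Artinian local ring with absolutely irreducible residual representation is determined up to isomorphism by its traces on a set of Frobenii of density one (Chebotarev together with the Brauer--Nesbitt / Carayol--Serre argument).

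I do not anticipate a serious obstacle: granting Carayol's theorem and Corollary~\ref{cor:ideal}, the construction is essentially formal. The one point demanding care is the twist, already flagged in the text: the cited form of Carayol's theorem yields a twist of the representation carrying the stated characteristic polynomials, so one must pin down and remove this twist before reading off $X^2 - T_\ell X + \diam{\ell}$; this is deferred to the more precise discussion promised above. It should also be emphasised that this proposition delivers only ``unramified outside $Np$''; the substantive improvement to unramifiedness at~$p$ is not part of this statement and is obtained through the doubling of Galois representations in the proof of Theorem~\ref{thm:main}.
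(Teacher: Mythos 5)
Your proposal is correct and coincides with the paper's own argument, which likewise defines $\rho_\fm$ as the composition of Carayol's $\rho_{\fm'}$ with the surjection $\GL_2(\TT_{p,\fm'}') \to \GL_2(\TT_{1,\fm})$ coming from Corollary~\ref{cor:ideal}~(\ref{cor:ideal:c}). The additional verifications you spell out (descent of unramifiedness and characteristic polynomials, uniqueness via Chebotarev and Carayol's lemma, the twist caveat) are exactly the routine points the paper leaves implicit.
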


\begin{proof}
It suffices to compose $\rho_{\fm'}$ with $\GL_2(\TT_{p,\fm'}') \to \GL_2(\TT_{1,\fm})$
coming from Corollary~\ref{cor:ideal}~(\ref{cor:ideal:c}).
\end{proof}

\subsection*{The $p$-divisible group for $p=2$}

Assume for the moment that $p=2$.
Let $J$ be the Jacobian $J_1(N)$ of $X_1(N)$, which is defined over~$\QQ$.
Let $G$ be the $\fm'$-component of the $p$-divisible group $J[p^\infty]_\QQ$ attached to~$J$.

A word of explanation is necessary (see also \cite{gross}, Section~12).
The maximal ideals $\tilde{\fm}$ of $\TT_p$
containing $\fm'$ correspond under pull-back to unique maximal ideals of the Hecke algebra
$\ZZ_p \otimes_\ZZ \TT_p(\Gamma_1(N))_\cl$, using Proposition~\ref{prop:comparison}.
This Hecke algebra acts on the Tate module of~$J$ and localisation at each~$\tilde{\fm}$
gives a direct factor of it.
Then $G$ is the direct product of the (at most two  by Lemma~\ref{lem:max})
corresponding $p$-divisible groups.
If $\TT_{p,\fm'}$ is non-local, then we shall denote by $G_1$ and $G_2$ the two
$p$-divisible groups such that $G = G_1 \times G_2$.

\subsection*{The $p$-divisible group for $p>2$}

Assume now $p>2$. We proceed very similarly to the above:
Let $J$ be the Jacobian $J_1(Np)$ of $X_1(Np)$, which is defined over~$\QQ$.
Let $G$ be the $\fm'$-component of the $p$-divisible group $J[p^\infty]_\QQ$ attached to~$J$.

Here we use that the ideals $\tilde{\fm}$ of $\TT_p$ containing $\fm'$ correspond
to unique maximal ideals of the Hecke algebra
 $\FF_p \otimes_\ZZ \TT_2(\Gamma_1(Np))_\cl$ by Proposition~\ref{prop:two}.
In turn they give rise, by taking preimages, to unique maximal ideals of
$\ZZ_p \otimes_\ZZ \TT_2(\Gamma_1(Np))_\cl$.
For each of these (at most two, by Lemma~\ref{lem:max}) maximal ideals we take the
$p$-divisible group of the corresponding factor of the Tate module of~$J$.
Thus, if $\TT_{p,\fm'}$ is non-local, then $G$ is of the form $G_1 \times G_2$.

\subsection*{Properties of the $p$-divisible group}

We assume that $G$ (and $G_1$ and $G_2$ in the non-local case)
is as defined above (for either $p=2$ or $p>2$).

\begin{prop}\label{prop:mult}
The $p$-divisible group~$G$ acquires good reduction over~$\ZZ_p[\zeta_p]$.
Let $G^0$ and $G^e$ be the connected component and the \'etale quotient of~$G$
over~$\ZZ_p[\zeta_p]$, respectively.
\begin{enumerate}[(a)]
\item\label{prop:mult:a}
The module $G^0[p](\Qbar_p)$ is unramified over~$\QQ_p(\zeta_p)$ and
there is an isomorphism $G^0[p](\Qbar_p) \cong \TT_{p,\fm'}$ of $\TT_{p,\fm'}$-modules,
under which any arithmetic Frobenius $\Frob_p \in \Gal(\Qbar_p/\QQ_p(\zeta_p))$ acts as $U_p^{-1}$.

\item\label{prop:mult:b}
The exact sequence $0 \to G^0 \to G \to G^e \to 0$ gives rise to the exact sequence
$$ 0 \to \TT_{p,\fm'} \to G[p](\Qbar_p) \to \Hom_{\FF_p}(\TT_{p,\fm'},\FF_p) \to 0$$
of $\TT_{p,\fm'}$-modules, under the identification of (\ref{prop:mult:a}) and its dual.

\end{enumerate}
\end{prop}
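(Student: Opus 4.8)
The plan is to derive both parts from a single geometric input---the \emph{good ordinary} reduction of $G$ over $\ZZ_p[\zeta_p]$---combined with mod~$p$ multiplicity one for the Hecke action, following \cite{gross}, Section~12. The first step is the good-reduction claim, which I expect to be the main obstacle for $p>2$. For $p=2$ it is immediate: $\ZZ_2[\zeta_2]=\ZZ_2$ and $J=J_1(N)$ already has good reduction at~$2$ since $2\nmid N$, so the $\fm'$-component~$G$ of $J[2^\infty]_\QQ$ extends to a $p$-divisible group over $\ZZ_2$. For $p>2$ the curve $X_1(Np)$ has bad (semistable) reduction at~$p$, and the point is that after the ramified base change to $\ZZ_p[\zeta_p]$ the \emph{ordinary} part of its Jacobian becomes an abelian scheme; since $\fm'$ is ordinary by Lemma~\ref{lem:max}, the $\fm'$-component lands in this good-reduction part. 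This rests on the fine structure of the N\'{e}ron model of $J_1(Np)$ over $\ZZ_p[\zeta_p]$ (Deligne--Rapoport, Katz--Mazur), which I would simply cite from \cite{gross}, Section~12, rather than reprove.

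Granting good reduction, I would form the connected--\'{e}tale sequence $0\to G^0\to G\to G^e\to 0$ over the complete discrete valuation ring $\ZZ_p[\zeta_p]$. Ordinarity forces $G^0$ to be of multiplicative type, so by Cartier duality the inertia action on $G^0[p](\Qbar_p)$ is through the mod~$p$ cyclotomic character times an unramified character. As $\QQ_p(\zeta_p)$ is precisely the field cut out by the mod~$p$ cyclotomic character, inertia then acts trivially and $G^0[p](\Qbar_p)$ is unramified over $\QQ_p(\zeta_p)$; this is the one place where the base change to $\ZZ_p[\zeta_p]$ is genuinely used. For the module structure I would invoke multiplicity one, i.e.\ the Gorenstein property of $\TT_{p,\fm'}$ (valid since $\rhobar_\fm$ is irreducible; cf.\ \cite{multiplicities}), which gives that $G[p](\Qbar_p)$ is free of rank~$2$ over $\TT_{p,\fm'}$. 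The \'{e}tale quotient is the rank-one unramified piece, so $G^0[p](\Qbar_p)\cong\TT_{p,\fm'}$ is the complementary free rank-one submodule. The Frobenius normalisation comes from the Eichler--Shimura relation, which makes $\Frob_p$ act as $U_p$ on the unramified quotient $G^e$, together with the Weil-pairing self-duality of~$J$: under it $G^0$ is $\TT_{p,\fm'}$-dual to $G^e$, the cyclotomic twist becomes trivial mod~$p$ over $\QQ_p(\zeta_p)$, and (after absorbing $\diam p$ into the twist of $\rho_{\fm'}$ referred to earlier) one reads off $\Frob_p=U_p^{-1}$ on $G^0[p](\Qbar_p)$.

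Part~(b) is then formal. Taking $\Qbar_p$-points of $p$-torsion in $0\to G^0\to G\to G^e\to 0$ gives a short exact sequence of $\TT_{p,\fm'}$-modules, right exactness holding because $G^0$ is $p$-divisible and hence $G[p]\to G^e[p]$ is surjective on $\Qbar_p$-points. Substituting $G^0[p](\Qbar_p)\cong\TT_{p,\fm'}$ from part~(a) and, via the same Weil-pairing duality (which on $p$-torsion over $\QQ_p(\zeta_p)$ identifies $\mu_p$ with $\FF_p$), the dual identification $G^e[p](\Qbar_p)\cong\Hom_{\FF_p}(\TT_{p,\fm'},\FF_p)$ yields exactly the asserted sequence $0\to\TT_{p,\fm'}\to G[p](\Qbar_p)\to\Hom_{\FF_p}(\TT_{p,\fm'},\FF_p)\to 0$. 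The only nontrivial ingredients are thus the cited good-reduction structure and multiplicity one; everything else is duality and the collapse of the mod~$p$ cyclotomic character over $\QQ_p(\zeta_p)$.
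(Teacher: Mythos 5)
Your overall route---good ordinary reduction over $\ZZ_p[\zeta_p]$ from \cite{gross}, Section~12, the connected--\'etale sequence, the multiplicative type of $G^0$ and the collapse of the mod~$p$ cyclotomic character over $\QQ_p(\zeta_p)$, then duality for the \'etale quotient---is the right skeleton, and it is essentially what the paper's one-line citation to \cite{multiplicities}, Proposition~2.2, Corollary~2.3 and Theorem~3.1 unpacks to. The gap is in how you obtain the $\TT_{p,\fm'}$-module structure. You derive $G^0[p](\Qbar_p)\cong\TT_{p,\fm'}$ from multiplicity one, ``valid since $\rhobar_\fm$ is irreducible''. That premise is false in exactly the case this proposition must still cover: by \cite{multiplicities}, Corollary~4.5 (used in the introduction and as implication (ii)$\Rightarrow$(i) of Theorem~\ref{thm:mult}), multiplicity one \emph{fails} for irreducible $\rhobar_\fm$ that are unramified at~$p$ with scalar $\rhobar_\fm(\Frob_p)$. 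Proposition~\ref{prop:mult} is stated before Assumption~\ref{ass:main} and carries no multiplicity-one hypothesis; indeed the asymmetric form of part~(b), with \'etale quotient $\Hom_{\FF_p}(\TT_{p,\fm'},\FF_p)$ rather than a second copy of $\TT_{p,\fm'}$, exists precisely because $\TT_{p,\fm'}$ need not be Gorenstein here. Your argument therefore proves the proposition only under an extra hypothesis it does not have.

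The logical dependency also runs the wrong way. In the sources, the unconditional freeness $G^0[p](\Qbar_p)\cong\TT_{p,\fm'}$ is proved directly: the cotangent space of $G^0$ in characteristic~$p$ is identified with the $\fm'$-localised space of cusp forms, and the $q$-expansion principle (the perfect pairing $(T,f)\mapsto a_1(Tf)$) makes this free of rank one over the Hecke algebra; the \'etale quotient is then $\Hom_{\FF_p}(\TT_{p,\fm'},\FF_p)$ by autoduality of the Jacobian. It is \emph{from} these two facts that the equivalence ``multiplicity one $\Leftrightarrow$ Gorenstein'' of Theorem~\ref{thm:mult} is afterwards deduced, so invoking multiplicity one to get the freeness is circular within the paper's architecture. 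The rest of your sketch (good reduction of the ordinary part over $\ZZ_p[\zeta_p]$; unramifiedness of $G^0[p](\Qbar_p)$ over $\QQ_p(\zeta_p)$ via Cartier duality; exactness of the torsion sequence from $p$-divisibility of $G^0$; the normalisation $\Frob_p=U_p^{-1}$, which one does best simply to cite) is sound as far as it goes.
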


\begin{proof}
This follows immediately from applying \cite{multiplicities}, Proposition~2.2, Corollary~2.3
and Theorem~3.1 for all maximal ideals~$\tilde{\fm} \subset \TT_p$ containing~$\fm'$.
We stress that those results were all derived from~\cite{gross}.
\end{proof}

Since in this article we are using arithmetic Frobenius elements, and on modular curves
(with level structure of the type $\mu_N \hookrightarrow E[N]$) geometric ones are more natural,
we have to twist our representations at various places.

It is well-known that $\rhobar_{\fm} \otimes \epsilon^{-1}$ is contained in the $\fm'$-kernel
$G[p](\Qbar)[\fm']$ of  $G[p](\Qbar)$ (possibly more than once, see e.g.\ \cite{multiplicities}, Proposition~4.1).

The following theorem is the result of the work of many authors. We do not intend to give all
the original references, but, content ourselves by referring to a place in the literature
where the statements appear as we need them.

\begin{thm}[Mazur, Wiles, Gross, Ribet, Buzzard, Tilouine, Edixhoven, W.]\label{thm:mult}
Assume that $\rhobar_\fm$ is absolutely irreducible.
Then the following statements are equivalent:
\begin{enumerate}[(i)]
\item\label{thm:mult:i}
$\rhobar_\fm$ is unramified at~$p$ and $\rhobar_\fm(\Frob_p)$ is non-scalar or $\rhobar_\fm$ is ramified at~$p$.
\item\label{thm:mult:ii}
$G[p](\Qbar)[\tilde{\fm}] \cong \rhobar_\fm$ for any maximal ideal $\tilde{\fm} \subset \TT_p$
containing~$\fm'$, i.e.\ $\rhobar_\fm$ satisfies {\em multiplicity one on the Jacobian}.
\item\label{thm:mult:iii}
$\TT_{p,\fm'} \cong \Hom_{\FF_p}(\TT_{p,\fm'} ,\FF_p)$,
i.e.\ $\TT_{p,\fm'}$ is {\em Gorenstein}.
\end{enumerate}

If the equivalent statements hold, then
$G[p](\Qbar) \cong \TT_{p,\fm'} \oplus \TT_{p,\fm'}$.
\end{thm}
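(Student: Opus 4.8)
The plan is to prove the cycle of implications among (i), (ii), (iii) and then derive the final statement about $G[p](\Qbar)$. The key structural input is Proposition~\ref{prop:mult}, which gives the short exact sequence of $\TT_{p,\fm'}$-modules
$$ 0 \to \TT_{p,\fm'} \to G[p](\Qbar_p) \to \Hom_{\FF_p}(\TT_{p,\fm'},\FF_p) \to 0, $$
where the submodule is the connected part and the quotient is the \'etale part, over $\ZZ_p[\zeta_p]$. I would first observe that $\TT_{p,\fm'}$ is a finite-dimensional local (or product of local) $\FF_p$-algebra, hence Artinian, so its Pontryagin-type dual $\Hom_{\FF_p}(\TT_{p,\fm'},\FF_p)$ is the dualising module and $\TT_{p,\fm'}$ is Gorenstein precisely when this dual is free of rank one, i.e.\ when (iii) holds. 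Thus (iii) is exactly the statement that the two outer terms of the sequence are isomorphic $\TT_{p,\fm'}$-modules.

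For the equivalence of (ii) and (iii) I would pass to the $\fm'$-kernel. Taking $[\tilde{\fm}]$ (equivalently counting the $\FF$-dimension of the $\tilde{\fm}$-torsion) in the exact sequence, the submodule contributes $\TT_{p,\fm'}[\tilde{\fm}]$, which is one-dimensional over $\FF$ since $\TT_{p,\fm'}$ is local with residue field~$\FF$, while the quotient contributes $\Hom_{\FF_p}(\TT_{p,\fm'},\FF_p)[\tilde{\fm}]$, whose dimension equals the minimal number of generators of $\TT_{p,\fm'}$ as a module over itself, i.e.\ it is one precisely when $\TT_{p,\fm'}$ is Gorenstein. Since $\rhobar_\fm \otimes \epsilon^{-1} \hookrightarrow G[p](\Qbar)[\fm']$ and $\rhobar_\fm$ is two-dimensional, multiplicity one (statement~(ii), that $G[p](\Qbar)[\tilde{\fm}]$ is exactly two-dimensional rather than larger) is equivalent to the \'etale part contributing exactly one dimension, hence to (iii). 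This is the heart of the argument, and it is here I expect the main obstacle: the exact sequence is only defined over $\ZZ_p[\zeta_p]$, so I must be careful that taking $\tilde{\fm}$-torsion and comparing $\FF$-dimensions is Galois-equivariantly compatible and that the local/connected summand really does contribute dimension exactly one—this uses the freeness of $G^0[p](\Qbar_p)$ over $\TT_{p,\fm'}$ from Proposition~\ref{prop:mult}~(\ref{prop:mult:a}) in an essential way.

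For the equivalence of (i) with (ii)/(iii) I would use the local description at~$p$ furnished by Proposition~\ref{prop:mult}: the failure of multiplicity one is governed exactly by whether the connected and \'etale parts are forced to coincide, which by Corollary~4.5 of~\cite{multiplicities} happens precisely in the excluded unramified-with-scalar-Frobenius case. Concretely, $\rhobar_\fm(\Frob_p)$ acts on $G^0[p]$ and on $G^e[p]$ through $U_p^{-1}$ and a dual eigenvalue respectively; when $\rhobar_\fm$ is unramified at~$p$ with scalar Frobenius these two eigenvalues collide and the module fails to split, breaking multiplicity one, whereas in all other cases (ramified, or unramified with distinct Frobenius eigenvalues) the two lines are distinct and (ii) holds. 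I would invoke \cite{multiplicities} and \cite{gross} for this dichotomy rather than reproving it. Finally, once (iii) holds, the exact sequence has isomorphic free rank-one outer terms, and I would argue that Gorenstein-ness forces the sequence to yield $G[p](\Qbar) \cong \TT_{p,\fm'} \oplus \TT_{p,\fm'}$: the middle term is then a $\TT_{p,\fm'}$-module that is an extension of a free rank-one module by a free rank-one module, and using that $\TT_{p,\fm'}$ is Gorenstein (so self-injective, being Artinian Gorenstein) the extension splits, giving the stated free rank-two conclusion.
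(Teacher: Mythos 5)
The paper does not actually prove this theorem: it is assembled entirely from citations ([nongorenstein] Theorem~1.2 for (i)$\Rightarrow$(ii), [multiplicities] Corollary~4.5 for (ii)$\Rightarrow$(i), [nongorenstein] Proposition~2.2(b) for (ii)$\Leftrightarrow$(iii), and [nongorenstein] Proposition~2.1 for the final statement). So your attempt to reconstruct the argument from Proposition~\ref{prop:mult} is a genuinely different route, and your overall strategy --- take $\tilde{\fm}$-torsion in the connected--\'etale sequence, compare $\FF$-dimensions, and use self-injectivity of an Artinian Gorenstein ring to split the sequence --- is indeed the standard skeleton behind those references. The splitting argument for the final statement is fine (once $\TT_{p,\fm'}^\vee \cong \TT_{p,\fm'}$ the quotient is free, hence projective, so the sequence splits as $\TT_{p,\fm'}$-modules).

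However, the central dimension count in your (ii)$\Leftrightarrow$(iii) step is stated backwards, and both halves of it are false as written. The submodule term $\TT_{p,\fm'}[\tilde{\fm}]$ is the \emph{socle} of the local Artinian ring $\TT_{p,\tilde{\fm}}$; its $\FF$-dimension is the Cohen--Macaulay type, which equals $1$ \emph{precisely when} the ring is Gorenstein --- it is not ``one-dimensional because the ring is local'' (that would make every Artinian local ring Gorenstein). Dually, $\Hom_{\FF_p}(\TT_{p,\fm'},\FF_p)[\tilde{\fm}]$ consists of the functionals killing $\tilde{\fm}$, i.e.\ it is dual to $\TT_{p,\tilde{\fm}}/\tilde{\fm} = \FF$, so it is \emph{always} one-dimensional; it does not detect Gorensteinness. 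The two mistakes compensate in the total count ($\dim_\FF G[p](\Qbar)[\tilde{\fm}] \le \mathrm{type} + 1$, which is $2$ iff Gorenstein), but the reasoning attached to each term must be swapped. Two further points need attention: taking $[\tilde{\fm}]$ of a short exact sequence is only left exact, so the inequality above gives Gorenstein $\Rightarrow$ multiplicity one cleanly (combined with $\rhobar_\fm \hookrightarrow G[p](\Qbar)[\tilde{\fm}]$), but the converse direction needs an extra input (in the literature one uses autoduality of the Jacobian / the Weil pairing together with Nakayama to show multiplicity one forces $G[p]$ to be free of rank two, whence Gorenstein); and the implication (i)$\Rightarrow$(ii) is the genuinely hard multiplicity-one theorem, which cannot be reduced to the ``eigenvalues of $\Frob_p$ collide'' heuristic --- your decision to cite [multiplicities] and [gross] there is the right one and matches what the paper does.
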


\begin{proof}
For the implication (\ref{thm:mult:i}) $\Rightarrow$ (\ref{thm:mult:ii}) we refer, for instance,
to \cite{nongorenstein}, Theorem~1.2.
The implication (\ref{thm:mult:ii}) $\Rightarrow$ (\ref{thm:mult:i}) is the content of \cite{multiplicities}, Corollary~4.5.
The equivalence of (\ref{thm:mult:ii}) and (\ref{thm:mult:iii}) is proved, for instance,
in \cite{nongorenstein}, Proposition~2.2~(b).
That (\ref{thm:mult:ii}) implies the final statement is, for example,
proved in \cite{nongorenstein}, Proposition~2.1.
\end{proof}

Note that by Proposition~\ref{prop:weightone} the case that $\rhobar_\fm$ is ramified at~$p$
is known not to occur in almost all cases. We are proving in Corollary~\ref{cor:main} that it
actually never occurs.

\subsection*{The Galois representation on the Jacobian}

We proceed under the following assumptions:

\begin{ass}\label{ass:main}
We continue to use Notation \ref{notat:mf} and~\ref{notat:galois}. Moreover:
\begin{itemize}
\item $\rhobar_\fm$ denotes the residual Galois representation introduced above. It
is the residual representation attached to~$g$.
We assume that $\rhobar_\fm$ is absolutely irreducible.

\item Let $G$ (and $G_1$, $G_2$ in the non-local case) be the $p$-divisible
group introduced earlier.

\item We assume that $\rhobar_\fm$ satisfies multiplicity one on the Jacobian so that
$G[p](\Qbar) \cong \TT_{p,\fm'} \oplus \TT_{p,\fm'}$ (see Theorem~\ref{thm:mult}).

\item Let $\tilde{\epsilon}: \Gal(\Qbar/\QQ) \twoheadrightarrow \Gal(\QQ(\zeta_N)/\QQ) \cong \ZZ/(N)^\times
\xrightarrow{a \mapsto \diam a} \TT_p'^\times$.
Note that composing $\tilde{\epsilon}$
with $\TT_{p,\fm'}'^\times \to (\TT_p'/\fm')^\times$ equals~$\epsilon$, the Dirichlet character of~$g$,
seen as a character of $\Gal(\Qbar/\QQ)$.

\end{itemize}
\end{ass}

The next proposition can be considered as a `doubling of Galois representations'.

\begin{prop}\label{prop:jac}
We use Assumption~\ref{ass:main}.
Let $\rho_{I_{\fm'}} := G[p](\Qbar) \otimes_{\TT_{p,\fm'}'} \TT_{1,\fm}$.
Then there is an isomorphism
$$ \rho_{I_{\fm'}} \cong \big( \rho_\fm \otimes \tilde{\epsilon}^{-1}\big)
                        \oplus \big( \rho_\fm \otimes \tilde{\epsilon}^{-1}\big).$$
of $\TT_{1,\fm}[\Gal(\Qbar/\QQ)]$-modules.
\end{prop}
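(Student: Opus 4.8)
The plan is to reduce the whole statement to the ``doubling of Hecke algebras'' of Corollary~\ref{cor:ideal}~(\ref{cor:ideal:b}) together with one clean identification of the Galois action on $G[p](\Qbar)$, and then to push that identification through the base change along the surjection $\TT_{p,\fm'}' \twoheadrightarrow \TT_{1,\fm}$ by associativity of tensor products. First I would record that, since the ring map $\TT_{p,\fm'}' \to \TT_{1,\fm}$ of Corollary~\ref{cor:ideal}~(\ref{cor:ideal:c}) is surjective with kernel $I_{\fm'}$, the functor $- \otimes_{\TT_{p,\fm'}'} \TT_{1,\fm}$ is simply reduction modulo $I_{\fm'}$; in particular $\rho_{I_{\fm'}}$ is the appropriate quotient of $G[p](\Qbar)$.

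The crux is the second step: identifying $G[p](\Qbar)$ as a $\TT_{p,\fm'}[\Gal(\Qbar/\QQ)]$-module. By Assumption~\ref{ass:main} (multiplicity one) the final clause of Theorem~\ref{thm:mult} gives $G[p](\Qbar) \cong \TT_{p,\fm'}^{\oplus 2}$ as a $\TT_{p,\fm'}$-module, i.e.\ free of rank two. As every Hecke operator is a correspondence defined over~$\QQ$, the Galois action is $\TT_{p,\fm'}$-linear, hence given by a homomorphism $\Gal(\Qbar/\QQ) \to \GL_2(\TT_{p,\fm'})$. I would then show this homomorphism is the twisted base change $(\rho_{\fm'} \otimes_{\TT_{p,\fm'}'} \TT_{p,\fm'}) \otimes \tilde\epsilon^{-1}$ of Carayol's representation. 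Both are free of rank two over $\TT_{p,\fm'}$, unramified outside~$Np$, and have the same characteristic polynomial of $\Frob_\ell$ for $\ell \nmid Np$; here the twist by $\tilde\epsilon^{-1}$ accounts exactly for the passage between the arithmetic Frobenius used throughout and the geometric normalisation natural on $X_1(Np)$ with its $\mu_N$-level structure, consistent with the already-noted containment of $\rhobar_\fm \otimes \epsilon^{-1}$ in $G[p](\Qbar)[\fm']$. Since $\rhobar_\fm$ is absolutely irreducible and $\TT_{p,\fm'}$ is a product of Artinian local $\FF_p$-algebras, a two-dimensional representation over $\TT_{p,\fm'}$ is determined up to isomorphism by its trace function (rigidity of pseudo-representations, after Carayol and Nyssen), and Chebotarev reduces matching traces to matching Frobenius traces; the two modules are therefore isomorphic.

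The third step is then formal. Using associativity of tensor products, and the fact that the twist by $\tilde\epsilon^{-1}$ (valued in $\TT_p'^\times$, hence by central scalars) survives any $\TT_{p,\fm'}'$-balanced base change, one gets
$$ \rho_{I_{\fm'}} = G[p](\Qbar) \otimes_{\TT_{p,\fm'}'} \TT_{1,\fm} \cong \rho_{\fm'} \otimes_{\TT_{p,\fm'}'} \big(\TT_{p,\fm'} \otimes_{\TT_{p,\fm'}'} \TT_{1,\fm}\big) \otimes \tilde\epsilon^{-1}. $$
Now Corollary~\ref{cor:ideal}~(\ref{cor:ideal:b}) supplies the decisive simplification $\TT_{p,\fm'} \otimes_{\TT_{p,\fm'}'} \TT_{1,\fm} = \TT_{p,\fm'}/I_{\fm'} \cong \TT_{1,\fm}^{\oplus 2}$, whence $\rho_{I_{\fm'}} \cong \big(\rho_{\fm'} \otimes_{\TT_{p,\fm'}'} \TT_{1,\fm}\big)^{\oplus 2} \otimes \tilde\epsilon^{-1}$. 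Since $\rho_{\fm'} \otimes_{\TT_{p,\fm'}'} \TT_{1,\fm} = \rho_\fm$ by the very definition in Proposition~\ref{prop:existence}, this yields $\rho_{I_{\fm'}} \cong (\rho_\fm \otimes \tilde\epsilon^{-1}) \oplus (\rho_\fm \otimes \tilde\epsilon^{-1})$, as required.

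The main obstacle is the second step: correctly matching the Galois action on $G[p](\Qbar)$ with the twisted base change of $\rho_{\fm'}$. The delicate points are tracking the precise twist $\tilde\epsilon^{-1}$ arising from arithmetic-versus-geometric Frobenius and the $\mu_N$-normalisation, and upgrading equality of Frobenius characteristic polynomials to an honest isomorphism of $\TT_{p,\fm'}[\Gal(\Qbar/\QQ)]$-modules, which rests on the absolute irreducibility of $\rhobar_\fm$ and on rigidity of pseudo-representations over the semi-local ring $\TT_{p,\fm'}$. Everything else is formal manipulation of tensor products feeding on the doubling of Hecke algebras.
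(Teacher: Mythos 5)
Your proposal is correct and follows essentially the same route as the paper: identify $G[p](\Qbar)$ (free of rank~$2$ over $\TT_{p,\fm'}$ by multiplicity one) with a twisted base change of Carayol's representation over $\TT_{p,\fm'}'$ using the Eichler--Shimura characteristic polynomials, absolute irreducibility of $\rhobar_\fm$ and Carayol's trace-rigidity, and then feed Corollary~\ref{cor:ideal}~(\ref{cor:ideal:b}) into an associativity-of-tensor-products computation. The only cosmetic difference is that you invoke uniqueness over $\TT_{p,\fm'}$ upfront, whereas the paper first descends $G[p](\Qbar)$ to a module over $\TT_{p,\fm'}'$ via Carayol's Th\'eor\`eme~2 and only identifies the quotient with $\rho_\fm\otimes\tilde\epsilon^{-1}$ at the very end via Th\'eor\`eme~1.
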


\begin{proof}
From \cite{carayol-anneau}, 3.3.2, and Theorem~\ref{thm:mult} it follows that
$H := G[p](\Qbar) \cong \TT_{p,\fm'} \oplus \TT_{p,\fm'}$
as $\TT_{p,\fm'}[\Gal(\Qbar/\QQ)]$-modules and that it is
characterised by the property that it is unramified outside $Np$
and that the characteristic polynomial of $\Frob_\ell$
is $X^2 - T_\ell/\diam \ell X + 1/\diam \ell \in \TT_{p,\fm'}'[X]$
for all primes $\ell \nmid Np$. We recall that \cite{carayol-anneau} works with geometric
Frobenius elements, whereas we are using arithmetic ones, accounting for the differences
in the formulae.

By Th\'eor\`eme~2 of loc.~cit., $H$ is obtained by scalar extension of
some $\TT_{p,\fm'}'[\Gal(\Qbar/\QQ)]$-module $H' \cong \TT_{p,\fm'}' \oplus \TT_{p,\fm'}'$,
i.e.\ $H = H' \otimes_{\TT_{p,\fm'}'} \TT_{p,\fm'}$.
Note that in this description $H$ is a $\Gal(\Qbar/\QQ)$-module via an action
on $H'$ and a $\TT_{p,\fm'}$-module via the natural action on $\TT_{p,\fm'}$ in the tensor product.

Next we have the following isomorphisms of $\TT_{p,\fm'}'[\Gal(\Qbar/\QQ)]$-modules:
\begin{multline*}
       H \otimes_{\TT_{p,\fm'}} \TT_{p,\fm'}/I_{\fm'}
\cong \big(H' \otimes_{\TT_{p,\fm'}'} \TT_{p,\fm'}\big)
         \otimes_{\TT_{p,\fm'}} \TT_{p,\fm'}/I_{\fm'}
\cong  H' \otimes_{\TT_{p,\fm'}'} \TT_{p,\fm'}/I_{\fm'}\\
\cong  H' \otimes_{\TT_{p,\fm'}'} \big(\TT_{1,\fm} \oplus \TT_{1,\fm}\big)
\cong \big(H' \otimes_{\TT_{p,\fm'}'} \TT_{1,\fm}\big)
      \oplus \big(H' \otimes_{\TT_{p,\fm'}'} \TT_{1,\fm}\big),
\end{multline*}
where we used Corollary~\ref{cor:ideal}~(\ref{cor:ideal:b}).
Note that the $\TT_{p,\fm'}'[\Gal(\Qbar/\QQ)]$-action factors through to
give a $\TT_{1,\fm}[\Gal(\Qbar/\QQ)]$-action.

Recall that
$\rho_\fm: \Gal(\Qbar/\QQ) \to \GL_2(\TT_{p,\fm'}'/I_{\fm'}) \cong \GL_2(\TT_{1,\fm})$
is characterised by it being unramified outside $Np$ and the characteristic polynomial
of $\Frob_\ell$ for $\ell \nmid Np$ being equal to $X^2 - T_\ell X + \diam \ell$.
Hence, the characteristic polynomial of $(\rho_\fm \otimes \tilde{\epsilon}^{-1})(\Frob_\ell)$
is $X^2 - T_\ell/\diam \ell X + 1/\diam \ell$.
Since $H' \otimes_{\TT_{p,\fm'}'} \TT_{1,\fm}$ satisfies the same properties,
it agrees with~$\rho_\fm \otimes \tilde{\epsilon}^{-1}$ by \cite{carayol-anneau}, Th\'eor\`eme~1.
\end{proof}

\section{Proof of Theorem~\ref{thm:main}}\label{sec:proof}

In this section we prove Theorem~\ref{thm:main}.
We deal with the cases when $\TT_{p,\fm'}$ is local or not separately.

Let us first remark that $N \ge 5$ can be assumed without loss of generality as follows.
One can increase the level at some unramified auxiliary prime~$q\ge 5$, $q \neq p$ and apply
the theorem in level~$Nq$, yielding a Galois representation with coefficients in the weight~$1$
Hecke algebra for $\Gamma_1(Nq)$ which is unramified outside~$Nq$.
Since the Hecke algebra for $\Gamma_1(N)$ is a quotient of the one for~$\Gamma_1(Nq)$,
one obtains the desired Galois representation, which is hence also unramified outside $Nq$.
Choosing a different auxiliary~$q$, one sees that
the Galois representation for $\Gamma_1(N)$ is unramified at the auxiliary prime.

\subsection*{No tame ramification}

We first show that there cannot be any tame ramification.

\begin{lem}\label{lem:tame}
Let $\TT$ be a finite dimensional local $\FF$-algebra with maximal
ideal~$\fm$ for a finite extension $\FF/\FF_p$.
If $A \in \ker(\GL_n(\TT) \to \GL_n(\TT/\fm))$
is a matrix such that $A^{p-1}=1$, then $A=1$.
\end{lem}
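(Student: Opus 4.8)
The plan is to reduce the claim to a statement about the order of $A$ in the multiplicative group $1 + \fm \subset \TT^\times$. Since $A$ lies in the kernel of reduction mod $\fm$, every entry of $A - 1$ lies in $\fm$, so I would write $A = 1 + M$ with $M \in \Mat_n(\fm)$. The key structural fact I want to exploit is that $\fm$ is a nilpotent ideal: because $\TT$ is a finite-dimensional local $\FF$-algebra, its maximal ideal satisfies $\fm^r = 0$ for some $r \ge 1$. Consequently $M$ is a nilpotent matrix in the sense that its entries lie in a nilpotent ideal, and $1 + M$ is a unipotent-type element.

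The core of the argument is a pro-$p$ statement: the kernel $K := \ker(\GL_n(\TT) \to \GL_n(\TT/\fm))$ is a finite $p$-group. To see this, I would filter $K$ by the subgroups $K_i := \ker(\GL_n(\TT) \to \GL_n(\TT/\fm^i))$, so that $K = K_1 \supseteq K_2 \supseteq \cdots \supseteq K_r = \{1\}$. Each successive quotient $K_i/K_{i+1}$ embeds into the additive group $\Mat_n(\fm^i/\fm^{i+1})$ via $1 + M \mapsto M \bmod \fm^{i+1}$, and this additive group is an $\FF$-vector space, hence an elementary abelian $p$-group since $\FF$ has characteristic $p$. Therefore $K$ is a finite $p$-group, and in particular every element of $K$ has order a power of $p$.

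Now I would finish by combining this with the hypothesis $A^{p-1} = 1$. The order of $A$ divides $p-1$; but $A \in K$, so its order is also a power of $p$. Since $\gcd(p, p-1) = 1$, the order of $A$ must be $1$, forcing $A = 1$. The main (and really only) obstacle is justifying that $\fm$ is nilpotent and that the graded pieces are $\FF$-vector spaces annihilated by $p$; both follow immediately from $\TT$ being a finite-dimensional local algebra over the characteristic-$p$ field $\FF$ (a finite-dimensional local ring is Artinian, so its maximal ideal is nilpotent, and $\fm^i/\fm^{i+1}$ is a quotient of $\FF$-vector spaces). Everything else is a coprimality argument, which is entirely routine.
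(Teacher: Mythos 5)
Your proof is correct, but it takes a different route from the paper's. The paper argues directly on the single element: writing $A=1+M$ with $M\in\Mat_n(\fm)$, it notes that $A^{p-1}=1$ forces $A=A^{p^r}$ for every $r$ (as $p^r\equiv 1 \pmod{p-1}$), while the freshman's dream in characteristic $p$ gives $(1+M)^{p^r}=1+M^{p^r}$, and nilpotence of $\fm$ kills $M^{p^r}$ for $r$ large --- so $A=1$ in one line. You instead prove the stronger structural statement that the whole congruence kernel $K=\ker\bigl(\GL_n(\TT)\to\GL_n(\TT/\fm)\bigr)$ is a finite $p$-group, by filtering it by the subgroups $K_i$ and embedding each graded piece $K_i/K_{i+1}$ into the elementary abelian group $\Mat_n(\fm^i/\fm^{i+1})$, and then conclude by coprimality of the order of $A$ with $p$. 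Both arguments rest on the same two facts (nilpotence of $\fm$ in an Artinian local ring and the coprimality of $p$ with $p-1$), and your filtration step is justified correctly ($MN\in\Mat_n(\fm^{2i})\subseteq\Mat_n(\fm^{i+1})$ makes $1+M\mapsto M$ a homomorphism on $K_i$). What your version buys is reusability: it isolates the standard fact that the congruence kernel is a $p$-group, which immediately handles any element of order prime to $p$. What the paper's version buys is brevity --- a two-line computation with no group-theoretic scaffolding. Either is acceptable here.
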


\begin{proof}
There is a matrix~$M$ all of whose entries are in~$\fm$ such that $A=1+M$.
Thus $A=A^{p^r} = (1+M)^{p^r} = 1+ M^{p^r}$ for all $r \in \NN$.
As $\fm$ is a nilpotent ideal and all entries of $M^{p^r}$ lie in $\fm^{p^r}$,
it follows that $M=0$.
\end{proof}

\begin{prop}\label{prop:tame}
Let $\TT$ be a finite dimensional local $\FF$-algebra with maximal
ideal~$\fm$ for a finite extension $\FF/\FF_p$.
Let $C$ be a subgroup of $\ZZ/(p-1)$ and $\rho: C \to \GL_n(\TT)$ a representation such that
the residual representation $\rhobar: C \to \GL_n(\TT/\fm)$ is trivial.
Then $\rho$ is trivial.
\end{prop}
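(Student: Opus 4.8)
The plan is to reduce the statement to a single application of Lemma~\ref{lem:tame}. Since $C$ is a subgroup of the cyclic group $\ZZ/(p-1)$, it is itself cyclic; the first step is to fix a generator $c$ of $C$ and let $d := |C|$ be its order, so that by Lagrange's theorem $d$ divides $p-1$. Because $\rho$ is a group homomorphism, it is entirely determined by the single matrix $\rho(c)$, and triviality of $\rho$ is equivalent to $\rho(c) = 1$.

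Next I would verify that $A := \rho(c)$ meets both hypotheses of Lemma~\ref{lem:tame} with $n = 2$. The assumption that $\rhobar$ is trivial says exactly that $\rho(c)$ reduces to the identity, i.e.\ $A \in \ker(\GL_2(\TT) \to \GL_2(\TT/\fm))$. For the order condition, the relation $\rho(c)^d = 1$ combined with $d \mid p-1$ gives $A^{p-1} = (A^d)^{(p-1)/d} = 1$. Lemma~\ref{lem:tame} then forces $A = 1$, whence $\rho$ is trivial since $c$ generates~$C$.

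There is essentially no obstacle here: beyond Lemma~\ref{lem:tame}, the only ingredient is the elementary observation that a subgroup of a cyclic group is cyclic, so that a single generator controls the whole representation, and that its order divides $p-1$ precisely so that the exponent $p-1$ appearing in the lemma applies. (In the degenerate case $p = 2$ one has $\ZZ/(p-1) = 0$, so $C$ is trivial and the statement holds vacuously.)
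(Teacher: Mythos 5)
Your argument is correct and is essentially the paper's own proof: both rest on the observations that triviality of $\rhobar$ places the image of $\rho$ inside $\ker(\GL_2(\TT)\to\GL_2(\TT/\fm))$ and that every element of $C$ has order dividing $p-1$, so that Lemma~\ref{lem:tame} forces each image matrix to be the identity. Your reduction to a single generator of the cyclic group $C$ is a harmless repackaging of the same idea.
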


\begin{proof}
As $\rhobar$ is trivial, $\rho$ takes its values in $\ker(\GL_n(\TT) \to \GL_n(\TT/\fm))$.
But, this group does not have any nontrivial element of order dividing~$p-1$
by Lemma~\ref{lem:tame}, whence $\rho$ is the trivial representation.
\end{proof}

\begin{cor}\label{cor:tame}
Let $\TT$ be a finite dimensional local $\FF$-algebra with maximal
ideal~$\fm$ for a finite extension $\FF/\FF_p$.
Let $\rho: \Gal(\Qbar_p/\QQ_p) \to \GL_2(\TT)$ be a representation and let
$\rhobar: \Gal(\Qbar_p/\QQ_p) \to \GL_2(\TT/\fm)$ be its residual representation.
Assume that the semisimplification of $\rhobar$ is unramified
and that the restriction of $\rho$ to $\Gal(\Qbar_p/\QQ_p(\zeta_p))$ is unramified.

Then $\rho$ is unramified.
\end{cor}

\begin{proof}
As the semisimplification of $\rhobar$ is unramified and the restriction of $\rho$
to $\Gal(\Qbar_p/\QQ_p(\zeta_p))$ is also unramified, it follows that $\rhobar$ is unramified.
Moreover, the image of the inertia group has to be a subgroup of $\ZZ/(p-1)$,
whence it acts trivially by Proposition~\ref{prop:tame}.
\end{proof}

\subsection*{The non-local case}

\begin{proof}[Proof of Theorem~\ref{thm:main} -- the non-local case]
Note that the assumptions of Theorem~\ref{thm:main} imply that
Assumption~\ref{ass:main} is satisfied due to Theorem~\ref{thm:mult}.
We now assume that we are in the non-local case, i.e.\
$\TT_{p,\fm'} \cong \TT_{p,\tilde{\fm}_1} \times \TT_{p,\tilde{\fm}_2}$.
Let $m_i \in \FF_p[X]$ be the minimal polynomial of~$U_p^{-1}$
acting on $\TT_{p,\tilde{\fm}_i}$. Then $m_1$ and $m_2$ are powers of coprime irreducible polynomials.
We obtain
$$ G[p](\Qbar)/I_{\fm'} = G_1[p](\Qbar)/I_{\fm'} \oplus G_2[p](\Qbar)/I_{\fm'}$$
and $G_i[p](\Qbar)/I_{\fm'}$ is characterised by the fact that $m_i(U_p^{-1})$ annihilates it.
From Proposition~\ref{prop:jac} it follows that $G[p](\Qbar)/I_{\fm'}$
is isomorphic to
$\big(\rho_\fm \otimes \tilde{\epsilon}^{-1}\big)
\oplus \big(\rho_\fm \otimes \tilde{\epsilon}^{-1}\big)$
as $\TT_{1,\fm}[\Gal(\Qbar/\QQ)]$-modules.
But, as such $G_1[p](\Qbar)/I_{\fm'} \cong G_2[p](\Qbar)/I_{\fm'}$, thus for both $i=1,2$ we have
$\rho_\fm \otimes \tilde{\epsilon}^{-1} \cong G_i[p](\Qbar)/I_{\fm'}$.

We are now going to work locally and let $\cG = \Gal(\Qbar_p/\QQ_p(\zeta_p))$ and $\cI$
its inertia group.
By Proposition~\ref{prop:mult}~(\ref{prop:mult:a}) applied to~$G_i$ we obtain for $i=1,2$ that
$$ G_i^0[p](\Qbar_p)/I_{\fm'} \hookrightarrow \big(G_i[p](\Qbar_p)/I_{\fm'}\big)^{\cI}
\cong \big(\rho_\fm \otimes \tilde{\epsilon}^{-1}\big)^{\cI}$$
and that $\Frob_p$ on the left hand side acts through~$U_p^{-1}$, whence the image
of the map is annihilated by $m_i(\Frob_p)$.
As the polynomials $m_1$ and $m_2$ are coprime,
$G^0[p](\Qbar_p)/I_{\fm'} \cong G_1^0[p](\Qbar_p)/I_{\fm'} \oplus G_2^0[p](\Qbar_p)/I_{\fm'}$
is a subrepresentation of $\big(\rho_\fm \otimes \tilde{\epsilon}^{-1}\big)^{\cI}$.
Counting $\FF_p$-dimensions, it follows that
$$G^0[p](\Qbar_p)/I_{\fm'} \cong \big(\rho_\fm \otimes \tilde{\epsilon}^{-1}\big)^{\cI}
\cong \rho_\fm \otimes \tilde{\epsilon}^{-1}.$$
Consequently, $\rho_\fm$ is unramified at~$p$, using Corollary~\ref{cor:tame} and
taking into account that the semisimplification of $\rhobar_\fm$ restricted
to $\Gal(\Qbar_p/\QQ_p)$ is unramified at~$p$.
Moreover, again due to Proposition~\ref{prop:mult}~(\ref{prop:mult:a}) the characteristic
polynomial of $\Frob_p$ on~$\rho_\fm \otimes \tilde{\epsilon}^{-1}$
is the one of $U_p^{-1}$, which is $X^2 - T_p/\diam p X + 1/\diam p$ (see Corollary~\ref{cor:ideal}),
so that the one of $\rho_\fm(\Frob_p)$ is as claimed.
\end{proof}

\subsection*{The local case}

\begin{prop}\label{prop:indec}
Let $R$ be a local $\FF_p$-algebra which is finite dimensional as an $\FF_p$-vector
space and let $\fm$ be its maximal ideal. Put $\FF = R/\fm$. Let $\cG$ be a group.
Let $M,N,Q$ be $R[\cG]$-modules which are free of rank~$2$ as $R$-modules and suppose
we have an exact sequence
$$ 0 \to N \xrightarrow{\alpha} M\oplus M \to Q \to 0$$
of $R[\cG]$-modules.
Suppose further that $\overline{N} := R/\fm \otimes_R N$ is indecomposable
as an $\FF[\cG]$-module.

Then $N \cong M \cong Q$ as $R[\cG]$-modules.
\end{prop}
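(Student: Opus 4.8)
The plan is to extract a single copy of $M$ from the two-fold direct sum $M \oplus M$ by projecting along a carefully chosen map, and then argue that $N$ must surject isomorphically onto it. The key input is the rank count: $N$, $M$, and hence $Q$ are all free of rank $2$ over $R$, so an exact sequence $0 \to N \to M \oplus M \to Q \to 0$ has the source of $R$-rank $2$, the middle of rank $4$, and the quotient of rank $2$. Thus $\alpha$ is an injection of a rank-$2$ free module into a rank-$4$ free module with free cokernel, which forces $\alpha(N)$ to be a direct summand of $M \oplus M$ as an $R$-module. First I would reduce modulo $\fm$: tensoring the sequence with $\FF = R/\fm$ over $R$ and using that $N, M, Q$ are $R$-free (so $\mathrm{Tor}_1^R(Q,\FF)=0$) yields a short exact sequence $0 \to \overline{N} \to \overline{M} \oplus \overline{M} \to \overline{Q} \to 0$ of $\FF[\cG]$-modules, each of $\FF$-dimension $2$, $4$, $2$ respectively.

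The heart of the argument is to understand the residual map $\overline{\alpha}: \overline{N} \hookrightarrow \overline{M} \oplus \overline{M}$. Composing with the two projections $\pi_1, \pi_2: \overline{M} \oplus \overline{M} \to \overline{M}$ gives two $\FF[\cG]$-homomorphisms $\pi_i \circ \overline{\alpha}: \overline{N} \to \overline{M}$. Since $\overline{\alpha}$ is injective, these two maps cannot both have nontrivial kernel meeting trivially; more usefully, since $\overline{N}$ is indecomposable and two-dimensional, any nonzero $\FF[\cG]$-endomorphism-like comparison must be either zero or injective. I expect that at least one $\pi_i \circ \overline{\alpha}$ is injective, hence an isomorphism $\overline{N} \cong \overline{M}$ by dimension count. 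The indecomposability of $\overline{N}$ is exactly what rules out the degenerate case where the image of $\overline{\alpha}$ sits skew across both summands so that neither projection is an isomorphism; in the indecomposable setting Schur-type rigidity pins down the image. This step — showing that the two projections of $\overline{\alpha}$ cannot conspire to make $\overline{\alpha}$ injective while both individual projections fail to be isomorphisms — is where indecomposability is essential and is the main obstacle.

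Once I have an isomorphism $\overline{N} \cong \overline{M}$ realized by some projection $\pi_i$, I would lift it: the composite $\beta := \pi_i \circ \alpha : N \to M$ is an $R[\cG]$-homomorphism whose reduction $\overline{\beta}$ is an isomorphism of free $R$-modules modulo $\fm$. By Nakayama's lemma, a map of finitely generated $R$-modules that is surjective modulo $\fm$ is surjective; since both $N$ and $M$ are free of the same rank $2$ over the local ring $R$, a surjection $N \twoheadrightarrow M$ between free modules of equal finite rank is automatically an isomorphism. This gives $N \cong M$ as $R[\cG]$-modules.

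Finally, to identify $Q$, I would use the splitting. Having produced an $R[\cG]$-splitting of $\alpha(N)$ as a direct summand of $M \oplus M$ — which the isomorphism $\beta^{-1}: M \to N$ composed with $\alpha$ realizes as a section-like embedding whose image is $\{(\beta^{-1} \text{-related pairs})\}$ — the quotient $Q \cong (M \oplus M)/\alpha(N)$ inherits a description as the cokernel, and the same Nakayama/rank-$2$-free reasoning forces $Q \cong M$. Concretely, after identifying $N$ with $M$ via $\beta$, the inclusion $\alpha: M \hookrightarrow M \oplus M$ takes the form $(1, \gamma)$ (or $(\gamma, 1)$) for some $R[\cG]$-endomorphism $\gamma$ of $M$, and the cokernel of such a map is isomorphic to $M$ as an $R[\cG]$-module, completing the chain $N \cong M \cong Q$.
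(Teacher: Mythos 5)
Your overall strategy coincides with the paper's: reduce modulo $\fm$, compose $\overline{\alpha}$ with the two projections onto $\overline{M}$, show that at least one composite is an isomorphism, and lift by Nakayama; your closing identification of $Q$ with the cokernel of a map of the form $(1,\gamma)$ is also fine (the paper does not even spell that part out). The problem is that the one step you yourself flag as ``the main obstacle'' is precisely the content of the proposition, and the justification you sketch for it does not work. You appeal to ``Schur-type rigidity'', i.e.\ that a nonzero $\FF[\cG]$-map out of the indecomposable module $\overline{N}$ must be zero or injective. That dichotomy is Schur's lemma for \emph{simple} modules; an indecomposable module that is not simple (say a two-dimensional module with a one-dimensional submodule) admits nonzero, non-injective maps, and the hypothesis here is only indecomposability. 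So as written the central step is not proved.

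The missing argument is short and uses indecomposability through kernels rather than images. Set $\overline{\phi_i}=\pi_i\circ\overline{\alpha}$ and $K_i=\ker\overline{\phi_i}\subseteq\overline{N}$. If neither $\overline{\phi_1}$ nor $\overline{\phi_2}$ is an isomorphism, then (by equality of dimensions) both $K_1$ and $K_2$ are nonzero $\FF[\cG]$-submodules of the two-dimensional module $\overline{N}$, while injectivity of $\overline{\alpha}$ gives $K_1\cap K_2=0$. Two nonzero subspaces of a two-dimensional space with trivial intersection are each one-dimensional and complementary, so $\overline{N}=K_1\oplus K_2$ is a nontrivial decomposition of $\FF[\cG]$-modules, contradicting the indecomposability hypothesis. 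Hence some $\overline{\phi_i}$ is injective, hence an isomorphism, and the rest of your argument (Nakayama, surjection between free rank-two modules is an isomorphism, cokernel computation) goes through. With this paragraph inserted your proof is complete and is essentially the paper's proof, which runs the same case analysis on $\dim_\FF\Image(\overline{\phi_i})$.
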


\begin{proof}
Write $\overline{M} := R/\fm \otimes_R M$. Counting dimensions as $\FF$-vector spaces
it follows that the sequence
$$ 0 \to \overline{N} \xrightarrow{\overline{\alpha}}
\overline{M}\oplus \overline{M} \to \overline{Q} \to 0$$
is an exact sequence of $\FF[\cG]$-modules.
Consider the composite map
$$ \phi_i: N \overset{\alpha}{\hookrightarrow} M \oplus M \xrightarrow{\pr_i} M$$
for $i=1,2$, where $\pr_i$ is the projection on the $i$-th summand. Note that the
$\phi_i$ are homomorphisms of $R[\cG]$-modules.
Tensor $\phi_i$ with $R/\fm$ to obtain
$$ \overline{\phi_i}: \overline{N} \overset{\overline{\alpha}}{\hookrightarrow}
\overline{M} \oplus \overline{M} \xrightarrow{\overline{\pr_i}} \overline{M}.$$
Note that the cases $\dim_\FF \Image(\overline{\phi_i}) \le 1$ for $i=1,2$ cannot
occur: If one of the dimensions is $1$ and the other $0$ or if both are~$0$,
then one has a contradiction to the injectivity of~$\overline{\alpha}$. If both are~$1$,
then $\overline{N} \cong \Image(\overline{\phi_1}) \oplus \Image(\overline{\phi_2})$
as $\FF[\cG]$-modules, which contradicts the assumed indecomposability of~$\overline{N}$.
Hence, there is $i\in \{1,2\}$ such that $\dim_\FF \Image(\overline{\phi_i})=2$.
Hence, $\overline{\phi_i}$ is an isomorphism $\overline{N} \to \overline{M}$.
It follows that $\phi_i:N \to M$ is surjective.
Indeed, tensoring the exact sequence $N \xrightarrow{\phi_i} M \to S \to 0$ over~$R$ with $R/\fm$,
shows that $\overline{S} = R/\fm \otimes_R S = 0$, whence $S = 0$ by Nakayama's lemma.
As $N$ and $M$ are finite sets, $\phi_i$ is an isomorphism of $R[\cG]$-modules.
\end{proof}

\begin{proof}[Proof of Theorem~\ref{thm:main} -- the local case]
Note that the assumptions of Theorem~\ref{thm:main} imply that
Assumption~\ref{ass:main} is satisfied due to Theorem~\ref{thm:mult}.
We now assume that $\TT_{p,\fm'}$ is local.
We are going to deduce the statement from Proposition~\ref{prop:indec}.
For $R$ we take $\TT_{1,\fm}$ and we let $\cG := \Gal(\Qbar_p/\QQ_p(\zeta_p))$.
In Proposition~\ref{prop:jac} we have seen that $\rho_{I_{\fm'}}$
is isomorphic to
$\big(\rho_\fm \otimes \tilde{\epsilon}^{-1}\big)
\oplus \big(\rho_\fm \otimes \tilde{\epsilon}^{-1}\big)$ as $R[\cG]$-modules
and we take $M$ to be $\rho_\fm \otimes \tilde{\epsilon}^{-1}$.

Next we reduce the exact sequence of Proposition~\ref{prop:mult}~(\ref{prop:mult:b})
modulo~$I_{\fm'}$. Due to multiplicity one, it remains exact (since it is split as
a sequence of $\TT_{p,\fm'}$-modules),
whence we obtain an exact sequence of $R[G]$-modules
$$ 0 \to \tilde{\rho} \to \rho_{I_{\fm'}} \to G^e[p](\Qbar_p)/I_{\fm'} \to 0,$$
where $\tilde{\rho} = G^0[p](\Qbar_p)/I_{\fm'}$.
By Proposition~\ref{prop:mult}~(\ref{prop:mult:a}) we know that $\tilde{\rho}$
is unramified as a $\cG$-module and it is free of rank~$2$ over~$R$.
Moreover, any arithmetic Frobenius at~$p$ acts through multiplication by~$U_p^{-1}$.
Also $G^e[p](\Qbar_p)/I_{\fm'}$ is free of rank~$2$ as an $R$-module.
Taking $\tilde{\rho}$ modulo~$\fm$ we obtain an indecomposable $R/\fm[\cG]$-module, where
the indecomposability is due to the formula for~$U_p$ (see Corollary~\ref{cor:ideal}).
Hence, we take $N$ to be~$\tilde{\rho}$, restricted to~$\cG$.

Thus, from Proposition~\ref{prop:indec} we obtain
$\rho_\fm \otimes \tilde{\epsilon}^{-1} \cong \tilde{\rho}$
as $\TT_{1,\fm}[\Gal(\Qbar_p/\QQ_p(\zeta_p))]$-modules, and, in particular, that
$\rho_\fm$ is unramified at~$p$, using Corollary~\ref{cor:tame} and
taking into account that the semisimplification of $\rhobar_\fm$
restricted to $\Gal(\Qbar_p/\QQ_p)$ is unramified at~$p$.
Moreover, again due to Proposition~\ref{prop:mult}~(\ref{prop:mult:a}) the characteristic
polynomial of $\Frob_p$ on~$\rho_\fm \otimes \tilde{\epsilon}^{-1}$
is the one of $U_p^{-1}$, which is $X^2 - T_p/\diam p X + 1/\diam p$ (see Corollary~\ref{cor:ideal}),
so that the one of $\rho_\fm(\Frob_p)$ is as claimed.
\end{proof}

\section{Examples}\label{sec:examples}

We illustrate Theorem~\ref{thm:main} by two examples. They both appeared first in Mestre's
appendix to~\cite{edix-jussieu} and we work them out in our context.

Both examples are of the following shape. Let $\FF$ be a finite field of characteristic~$p$
and $\TT := \FF[\epsilon] := \FF[X]/(X^2)$. Then we have the split exact sequence of groups:
$$ 0 \to \Mat_2(\FF)^0 \xrightarrow{A \mapsto 1 + \epsilon A} \SL_2(\TT)
\xrightarrow{\epsilon \mapsto 0} \SL_2(\FF) \to 1,$$
where $\Mat_2(\FF)^0$ denotes the $2\times 2$-matrices of trace zero
(considered here as an abelian group with respect to addition), on which
$\SL_2(\FF)$ acts by conjugation (i.e.\ it is the adjoint representation).
If $p>2$, then this representation is irreducible, if $p=2$ it has non-trivial submodules.

\subsection*{Example $p=2$, $N=229$}

In this case there is only one normalised eigenform $g \in S_1(\Gamma_1(N);\Fbar_2)_\Katz$ and thus
only a unique maximal ideal $\fm \subset \TT_1$.
For example using {\sc Magma} (\cite{magma}) and a package developed by the author
(see Appendix~B of~\cite{edix-jussieu} for an old version),
one computes that $\TT_1 \cong \FF_2[\epsilon]$ and that
$\rhobar_\fm = \rhobar_g: \Gal(\Qbar/\QQ) \twoheadrightarrow \SL_2(\FF_2) \cong S_3$.
If $\ker(\rhobar_g) = \Gal(\Qbar/K)$, then $K$ is the Hilbert class field of~$\QQ(\sqrt{229})$.
Let us call $G$ the image of $\rho_\fm: \Gal(\Qbar/\QQ) \to \SL_2(\TT)$ coming
from Theorem~\ref{thm:main}. It turns out that
$G \cap \Mat_2(\FF_2)^0 = \{ \mat 0000, \mat 0110, \mat 1101, \mat 1011 \}$
(with the intersection being taken with respect to the map $A \mapsto 1+\epsilon A$)
and that $G \cong S_4$.
In fact, this example can be obtained by reducing $\rho_f$, where $f$ is a holomorphic weight~$1$
cuspidal eigenform with $\rho_f$ having projective image $S_4$.

Hence, the fact that $\rho_\fm$ is unramified at~$2$, which follows from Theorem~\ref{thm:main},
can already be deduced from the theorem of Deligne and Serre.

\subsection*{Example $p=2$, $N=1429$}

In this case there is a normalised eigenform $g \in S_1(\Gamma_1(N);\Fbar_2)_\Katz$ such that
the image of $\rhobar_g$ is isomorphic to $\SL_2(\FF_8)$. As $\SL_2(\FF_8)$ is not
a quotient of any finite subgroup of $\GL_2(\CC)$, this implies, as noted by Mestre,
that $g$ is not the reduction of any holomorphic weight~$1$ eigenform.
Let $\fm$ be the maximal ideal of~$\TT_1$ corresponding to~$g$.
One computes that $\TT_{1,\fm} \cong \FF_8[\epsilon]$.
If $\ker(\rhobar_g) = \Gal(\Qbar/K)$, then $K$ is a Galois extension of~$\QQ$
with Galois group $\SL_2(\FF_8)$ which is unramified outside the prime~$1429$.

Now consider $\rho_\fm:\Gal(\Qbar/\QQ) \to \SL_2(\TT_{1,\fm})$ from Theorem~\ref{thm:main}
and let $L$ be the Galois extension of~$\QQ$ such that $\Gal(\Qbar/L) \cong \ker(\rho_\fm)$,
which is unramified outside~$1429$.
After checking many Frobenius traces it seems very likely that $L$ is
$K(\sqrt{1429})$ and, hence, that $G := \Image(\rho_\fm) \cong \SL_2(\FF_8) \times \ZZ/(2)$.
Explicitly, $G \cap \Mat_2(\FF_2)^0 = \{ \mat 0000, \mat 1001 \}$.

In this case it is clear that $L$ is unramified at~$2$ without appealing to Theorem~\ref{thm:main}.
However, the remarkable phenomenon is that this extension $L/\QQ$ is detected by
weight one Katz forms through their Hecke algebras.
This points in the direction that one should ask if $\TT_{1,\fm}$ is in fact a universal
deformation ring of $\rhobar_\fm$
in the category of local $\FF_p$-algebras with residue field~$\TT_1/\fm$ for
the local conditions of being unramified at~$p$ and minimally ramified elsewhere.

\bibliographystyle{amsalpha}
\bibliography{GalRepWO}
\bigskip

\noindent
Gabor Wiese\\
Universit\'e du Luxembourg\\
Facult\'e des Sciences, de la Technologie et de la Communication\\
6,~rue Richard Coudenhove-Kalergi\\
L-1359 Luxembourg\\
Luxembourg\\
E-mail:   {\tt gabor.wiese@uni.lu}\\
Web page: {\tt http://math.uni.lu/$\sim$wiese}\\

\end{document}